\title{Extending the double ramification cycle by resolving the Abel-Jacobi map}
\let\oref\ref
\newcommand{\on}[1]{\operatorname{#1}}
\newcommand{\bb}[1]{{\mathbb{#1}}}
\newcommand{\cl}[1]{{\mathscr{#1}}}
\newcommand{\ca}[1]{{\mathcal{#1}}}
\newcommand{\ul}[1]{{\underline{#1}}}
\newcommand{\abs}[1]{\lvert#1\rvert}
\newcommand{\lra}{\longrightarrow}
\newcommand{\hra}{\hookrightarrow}
\newcommand{\sub}{\subseteq}
\newcommand{\tra}{\rightarrowtail}
\newcommand{\iso}{\stackrel{\sim}{\lra}}
\theoremstyle{definition}
\newtheorem{definition}{Definition}[section]
\newtheorem{conjecture}[definition]{Conjecture}
\theoremstyle{plain}
\newtheorem{proposition}[definition]{Proposition}
\newtheorem{lemma}[definition]{Lemma}
\newtheorem{theorem}[definition]{Theorem}
\newtheorem{corollary}[definition]{Corollary}
\theoremstyle{remark}
\newtheorem{remark}[definition]{Remark}
\renewcommand{\phi}{\varphi}
\newcommand{\loz}{\lozenge}
\newcommand{\bloz}{\blacklozenge}
\author{David Holmes}
\date{\today}
\newcounter{nootje}
\newcommand\todo[1]{[*\thenootje]\marginpar{\tiny\begin{minipage}
{20mm}\begin{flushleft}\thenootje : 
#1\end{flushleft}\end{minipage}}\addtocounter{nootje}{1}}
\newcommand{\expanded}[1]{#1}
\renewcommand{\expanded}[1]{}
\newcommand{\beq}{\begin{equation}}
\newcommand{\eeq}{\end{equation}}
\newcommand{\beqs}{\begin{equation*}}
\newcommand{\eeqs}{\end{equation*}}
\begin{document}
\maketitle
\begin{abstract} 
Over the moduli space of smooth curves, the double ramification cycle can be defined by pulling back the unit section of the universal jacobian along the Abel-Jacobi map. This breaks down over the boundary since the Abel-Jacobi map fails to extend. We construct a `universal' resolution of the Abel-Jacobi map, and thereby extend the double ramification cycle to the whole of the moduli of stable curves. In the non-twisted case, we show that our extension coincides with the cycle constructed by Li, Graber, Vakil via a virtual fundamental class on a space of rubber maps. 

\emph{Keywords: curves, jacobians, cycles}
%
%
%
%
\end{abstract}


\tableofcontents

\newcommand{\Mtildes}{ \widetilde{\ca M}^\Sigma}
\newcommand{\Mhat}{\hat{\cl M}}
\newcommand{\sch}[1]{\textcolor{blue}{#1}}

\newcommand{\Mbar}{\overline{\ca M}}
\newcommand{\MD}{\ca M^\blacklozenge}
\newcommand{\Md}{\ca M^\lozenge}


\section{Introduction}

%

Fix integers $g$, $n\ge 0$ satisfying $2g - 2 + n >0$, and integers $a_1 \dots a_n$ and $k$ such that $\sum_i a_i = k(2g-2)$. Over the moduli stack $\ca M_{g,n}$ we have the universal curve $C_{g,n}$ with tautological sections $x_1, \dots, x_n$. Write $J$ for the universal jacobian (an abelian scheme over $\ca M_{g,n}$) and $\sigma$ for the section of $J$ given by the line bundle $\omega^{\otimes k}(-\sum_i a_i x_i)$ on $C_{g,n}$. The pullback of the unit section along $\sigma$ defines a codimension-$g$ cycle class on $\ca M_{g,n}$, the \emph{double ramification cycle} (DRC). The problem of producing a `reasonable' extension of the DRC to the Deligne-Mumford-Knudsen compactification $\Mbar_{g,n}$, and computing the class of the resulting cycle in the tautological ring, was proposed by Eliashberg. 

In the case $k=0$, an extension of the class to the whole of $\Mbar_{g,n}$ was constructed by Li \cite{Li2001Stable-morphism}, \cite{Li2002A-degeneration-}, Graber, and Vakil \cite{Graber2005Relative-virtua}. This class was computed in the compact-type case by Hain \cite{Hain2013Normal-function}, and this was extended to treelike curves with one loop by Grushevsky and Zakharov \cite{Grushevsky2012The-zero-sectio}. More recently, Janda, Pandharipande, Pixton and Zvonkine \cite{Janda2016Double-ramifica} computed this class on the whole of $\Mbar_{g,n}$, proving a conjecture of Pixton. 

A construction for arbitrary $k$ was proposed by Gu\'er\'e \cite{Guere2016A-generalizatio}, but the situation here is more complicated. Pixton's conjecture makes sense for all $k$, but is purely combinatorial in origin. A more geometric conjecture is given by Janda, Pandharipande, Pixton, and Zvonkine in the appendix of \cite{Faber2005Relative-maps-a} for $k=1$, generalised by Schmitt \cite{Schmitt2016Dimension-theor} to all $k \ge 1$; their formulae are moreover conjectured to coincide with Pixton's. However, it is at present unclear whether Gu\'er\'e's construction is compatible with these conjectures. 


In this paper we give a simple construction of an extension of the DRC for arbitrary $k$, and for $k=0$ we verify that it agrees with the construction of Li, Graber and Vakil. Given the situation described above, it seems very interesting to know whether it coincides with the construction of Gu\'er\'e when $k$ is arbitrary; this is the subject of current work in progress. Our construction is valid in arbitrary characteristic, and produces a class in $\on{CH}(\Mbar_{g,n})$; we do not need to work with rational coefficients. 

Kass and Pagani \cite{Kass2017The-stability-s} have recently constructed large numbers of extensions of the DRC (for every $k \in \bb Z$). In \cite{Holmes2017Jacobian-extens} we show that certain of their classes coincide with the one constructed in this article.


\subsection*{Statement of main results}

We write $J/ \Mbar_{g,n}$ for the unique semiabelian extension of the universal jacobian (also denoted above by $J$), and $\omega$ for the relative dualising sheaf of the universal stable curve over $\Mbar_{g,n}$. The section $\sigma = \omega^{\otimes k}\left(-\sum_i a_i x_i\right)$ does not in general extend over the whole of $\Mbar_{g,n}$. This can be partially resolved by blowing up $\Mbar_{g,n}$. Let $f\colon X \to \Mbar_{g,n}$ be a proper birational map from a normal stack (a `normal modification'). The section $\sigma$ is then defined on some dense open of $X$. We write $\mathring X$ for the largest open of $X$ on which this rational map can be extended to a morphism, and $\sigma_X\colon \mathring X \to J$ for the extension. 

We define the \emph{double ramification locus} $\on{DRL}_X \tra \mathring X$ to be the schematic pullback of the unit section of $J$ along $\sigma_X$, and the \emph{double ramification cycle} $\on{DRC}_X$ to be the cycle-theoretic pullback, \emph{as a cycle supported on }$\on{DRL}_X$ (actually we pull back the unit section in $J \times_{\Mbar_{g,n}} \mathring X$ along the induced map $\mathring  X \to J \times_{\Mbar_{g,n}} \mathring X$, since the latter is a regular closed immersion). Now the map $\mathring X \to \Mbar_{g,n}$ is rarely proper, but the map $\on{DRL}_X\to \bar {\ca M}_{g,n}$ \emph{is} quite often proper. More precisely, we have: 

\begin{theorem}[\ref{{main_theorem:proper_full}}]\label{main_theorem:proper}
In the directed system of all normal modifications of $\Mbar_{g,n}$, those $X$ such that $\on{DRL}_X \to \Mbar_{g,n}$ is proper form a cofinal system. 
\end{theorem}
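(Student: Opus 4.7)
The plan is to verify the statement via the valuative criterion. For any normal blowup $X \to \bar{\ca M}_{g,n}$ I aim to construct a further normal blowup $X' \to X$ such that $\on{DRL}_{X'}$ is closed in $X'$; properness over $\bar{\ca M}_{g,n}$ then follows since $X'$ itself is proper over $\bar{\ca M}_{g,n}$. By the valuative criterion for DM stacks, this amounts to: for any trait $T = \on{Spec} R$ (with $R$ a DVR, possibly after ramified extension) and morphism $T \to X'$ whose generic point lies in $\on{DRL}_{X'}$, the special point lies in $\on{DRL}_{X'}$ as well. Writing $e$ for the unit section of $J$, we have $\on{DRL}_{X'} = \sigma_{X'}^{-1}(e)$, and since $e$ is a closed immersion, the only real content is that $\sigma$ must extend over the special point of $T$.

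The key geometric input is the following. Let $C/T$ be the family of stable curves classified by $T \to \bar{\ca M}_{g,n}$. The generic point's being in $\on{DRL}_{X'}$ means $L := \omega^{\otimes k}_{C/T}\bigl(-\sum_i a_i x_i\bigr)$ is trivial on $C_K$, hence $L \cong \ca O_C(D)$ for some Cartier divisor $D$ supported on the special fibre $C_k$. The obstruction to extending $\sigma$ directly is that the multidegree of $L$ on the irreducible components of $C_k$ may be nonzero. However, since the intersection pairing on those components is negative semidefinite with radical generated by the class of $C_k$, and the total degree of $L$ on $C_k$ vanishes (by flatness plus triviality on $C_K$), one can solve for a divisor $D_0$ supported on $C_k$ such that $L \otimes \ca O_C(-D_0)$ has multidegree zero on every fibre. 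This twisted line bundle then defines a morphism $T \to J$ extending the given generic section; in short, $\sigma$ can always be made to extend by twisting with boundary divisors.

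Constructing $X'$ amounts to encoding this twist-choice globally. The datum $D_0$ depends only on the dual graph of $C_k$ and the multidegree of $L$ there; for each boundary stratum of $\bar{\ca M}_{g,n}$, the possible choices fit inside a rational polyhedral cone (a subcomplex of tropical moduli). I would construct $X'$ as a logarithmic or toroidal modification corresponding to a subdivision of these cones, fine enough to (i) dominate the subdivision corresponding to $X$, and (ii) select a canonical integer twist on each top-dimensional cone. The main obstacle is globalisation: ensuring that one blowup $X'$ works uniformly over all boundary strata while dominating an arbitrary given $X$. I expect this to follow from a careful combinatorial subdivision of the tropical moduli space (i.e.\ of the logarithmic structure on $\bar{\ca M}_{g,n}$), or alternatively from a Raynaud--Gruson style flattening of an appropriate relative compactified Picard stack. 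Once such $X'$ is built, the valuative criterion is immediate: the twist argument shows $\sigma$ extends at the special point, its image lands in $e$ by closedness of the unit section, and therefore the special point lies in $\on{DRL}_{X'}$.
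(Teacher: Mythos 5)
Your overall shape is the right one --- resolve $\sigma$ by a toroidal/logarithmic modification governed by combinatorial twist data attached to the dual graphs, then run the valuative criterion using the fact that triviality of $L$ on the generic fibre forces $L\cong\ca O_C(D)$ for a vertical divisor $D$. This is essentially the strategy of the paper. But there are two genuine gaps. First, the plan to ``select a canonical integer twist on each top-dimensional cone'' of a complete subdivision cannot work as stated: the cones $c_w$ attached to integer weightings $w$ do \emph{not} cover the positive orthant $\bb Q_{\ge 0}^E$ (see the $2$-gon example: finitely many rays in a quadrant), so any \emph{proper} toroidal modification $X'$ necessarily contains top-dimensional cones carrying no integer weighting, and on the corresponding strata $\sigma$ does not extend. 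Hence $\mathring{X'}$ is always a strictly smaller open of $X'$, and ``$\on{DRL}_{X'}$ is closed in $X'$'' is not something you can arrange by making $\sigma$ extend everywhere; it is exactly the non-obvious content of the theorem (the paper's $\Md$ is the universal $\sigma$-extending modification and is essentially never proper).

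Second, your valuative criterion step conflates ``$\sigma$ extends after pullback along the trait $T$'' with ``the closed point of $T$ lies in $\mathring{X'}$''. The twist argument does show that $T\to\Mbar$ is $\sigma$-extending, but that is a statement about the pullback family over $T$; it does not by itself put the image of the closed point inside the locus $\mathring{X'}$ where $\sigma$ extends on $X'$ --- a priori two traits through the same point of $X'$ could demand incompatible twists, which is precisely the indeterminacy the blowup is supposed to kill. To close this you need a universal property: if $X'$ dominates a compactification $\MD$ of the terminal $\sigma$-extending object $\Md$, then the factorisation $T\to\Md$ supplied by $\sigma$-extension agrees generically with $T\to X'\to\MD$, hence agrees everywhere by separatedness, forcing the closed point into $\mathring{X'}$; one must then still check it lands in $\on{DRL}_{X'}$ and that such $X'$ are cofinal. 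The paper does exactly this, and the hard inputs you defer (``I expect this to follow from a careful combinatorial subdivision'') are where the work lives: proving the cones $c_w$ form a finite fan, establishing the universal property, and the explicit extension of the weighting from $\Gamma_\eta$ to $\Gamma_p$ via orders of vanishing in the properness proof for $\on{DRL}_\loz$. Relatedly, your appeal to the negative semidefinite intersection pairing to produce $D_0$ silently assumes the vertical components are Cartier on the total space $C$, which fails when $C$ is not regular; the compatibility condition defining $c_w$ is exactly the Cartier condition, so this cannot be waved away in the global construction.
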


Now $\on{DRC}_X$ is supported on $\on{DRL}_X$, so when the map $\on{DRL}_X \to \Mbar_{g,n}$ is proper, we can take the pushforward of $\on{DRC}_X$ to $\Mbar_{g,n}$. Writing ${\pi_X}_*\on{DRC}_X$ for the resulting cycle on $\Mbar_{g,n}$, we might hope that these cycles `converge' in some way as we move up in the tower of modifications. In fact, this is true in a very strong sense: 

\begin{theorem}[\ref{main_theorem:limit_full}]\label{main_theorem:limit}
The net ${\pi_X}_*\on{DRC}_X$ is eventually constant in the Chow ring  $\on{CH}(\Mbar_{g,n})$. We denote the limit by $\overline{\on{DRC}}$. 
\end{theorem}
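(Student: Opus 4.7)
By \ref{main_theorem:proper}, we may restrict the net to the cofinal subsystem $\mathcal S$ of normal blowups $X$ with $\on{DRL}_X \to \Mbar_{g,n}$ proper. A net in a directed system is eventually constant iff there is a threshold above which all values agree, so it suffices to produce $X_0 \in \mathcal S$ such that for every $X \to X_0$ in $\mathcal S$, ${\pi_X}_*\on{DRC}_X = {\pi_{X_0}}_*\on{DRC}_{X_0}$ in $\on{CH}_{\bb Q}(\Mbar_{g,n})$. By transitivity, it is enough to compare $X$ and $X_0$ directly via a morphism $f\colon X \to X_0$ in $\mathcal S$, and to check that ${\pi_{X_0}}_*\bigl(f_*\on{DRC}_X - \on{DRC}_{X_0}\bigr) = 0$.

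The key compatibility is that on the open $U := f^{-1}(\mathring{X_0}) \subseteq \mathring X$ we have $\sigma_X|_U = \sigma_{X_0} \circ f|_U$, so the diagram
\[
\begin{tikzcd}
\on{DRL}_X \cap U \arrow[r] \arrow[d, "f|_U"'] & \on{DRL}_{X_0} \arrow[r] \arrow[d] & \Mbar_{g,n} \arrow[d, "e"] \\
U \arrow[r, "f|_U"] & \mathring{X_0} \arrow[r, "\sigma_{X_0}"] & J
\end{tikzcd}
\]
has both squares Cartesian. Since $f|_U$ is proper birational and $e\colon \Mbar_{g,n} \hra J$ is a regular embedding of codimension $g$, functoriality of refined Gysin pullback (Fulton, Ch.~6) identifies $\on{DRC}_X|_U$ with $(f|_U)^* \on{DRC}_{X_0}$, and the projection formula then yields $(f|_U)_*(\on{DRC}_X|_U) = \on{DRC}_{X_0}$. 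This handles the ``main'' contribution: the part of $\on{DRC}_X$ supported over $\mathring{X_0}$ pushes correctly.

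The remaining task is to control the residual cycle, supported on $\on{DRL}_X \setminus U$, whose image under $f$ lies in $X_0 \setminus \mathring{X_0}$ and whose image under $\pi_X$ therefore lies in the Abel--Jacobi indeterminacy locus $B := \Mbar_{g,n} \setminus \mathring{\Mbar}_{g,n}$ (of codimension $\ge 2$ by the codimension-one extension of the semiabelian Néron model). The main obstacle, and the crux of the theorem, is to show that these ``spurious boundary'' contributions are $X$-independent for $X$ above a suitable threshold in $\mathcal S$. I expect this to use the universal constructions developed earlier in the paper (the spaces $\ca M^\lozenge$, $\ca M^\blacklozenge$) which package the admissible resolutions of the Abel--Jacobi map into a canonical tower. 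Once $X_0$ is chosen above the relevant universal level, every component of $\on{DRL}_X$ lying over the indeterminacy locus is the strict transform of a fixed component downstairs, and an étale-local or deformation-theoretic analysis of $\sigma_X$ near these components shows the cycle-theoretic multiplicities are likewise pulled back. Combining this with the birationality of $f$ and the projection formula then forces the residual contribution on $X$ to equal that on $X_0$ after $\pi_{X*}$, completing the stabilisation. The proof would then conclude by defining $\bar{\on{DRC}}$ as the common eventual value.
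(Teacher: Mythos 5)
There is a genuine gap, in two places. First, your treatment of the ``main contribution'' is not well-defined as written: you propose to identify $\on{DRC}_X|_U$ with $(f|_U)^*\on{DRC}_{X_0}$ and then apply the projection formula, but $f|_U$ is merely proper and birational and $X_0$ may be singular, so there is no pullback of cycle classes $(f|_U)^*$ on Chow groups to invoke. This is precisely the subtlety the paper flags (``since $\Md$ and $X$ may be singular we must take some care with pulling back cycles''). The correct move is to push forward rather than pull back: the unit section gives a cycle $[e_X]$ on $J_X = J\times_{\Mbar}X$ whose pushforward under the proper birational map $J_X \to J_{X_0}$ is $[e_{X_0}]$, and then commutativity of proper pushforward with the refined Gysin homomorphism of the regular embedding $\sigma_{X_0}$ (Fulton, Theorem 6.2(a), together with the compatibility 6.2(c) identifying the Gysin maps of $\sigma_X$ and $\sigma_{X_0}$) gives $f_*\sigma_X^*[e_X] = \sigma_{X_0}^*[e_{X_0}]$, i.e.\ $f_*\on{DRC}_X = \on{DRC}_{X_0}$ directly.

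Second, and more seriously, the ``residual cycle'' step --- which you yourself identify as the crux --- is not proved but only conjectured (``I expect this to use the universal constructions\dots''). In fact, with the right choice of threshold the residual contribution is \emph{empty}, and seeing this is the real content of the stabilisation. Take $X_0 = \MD$, the compactified universal $\sigma$-extending blowup of \ref{lem:existence_of_cpct}. For any admissible blowup $X \to \Mbar$ factoring through $\MD$ via $f$, the universal property of $\Md$ (\ref{cor:universal_property_md}) forces $\mathring X = f^{-1}\tilde\Md$: the inclusion $\supseteq$ holds because $\tilde\Md$ is $\sigma$-extending, and $\subseteq$ follows by lifting traits through points of $\mathring X$ using the $\sigma$-extending property and properness of $\tilde\Md \to \Md$. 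Hence your $U = f^{-1}(\mathring X_0)$ equals all of $\mathring X$, and $\on{DRL}_X$, being by definition a closed subscheme of $\mathring X$, lies entirely over $\mathring X_0$; there is no part of $\on{DRC}_X$ left to control. Without this identification your argument cannot close, since nothing in your setup bounds the spurious components or their multiplicities. Once both points are repaired, the comparison of any $X$ over $\MD$ with $\MD$ itself, combined with cofinality of such $X$ (which you do invoke correctly), yields the eventual constancy.
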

In other words, there exists a normal modification $X \to \Mbar_{g,n}$ such that, for every normal modification $X'$ dominating $X$, we have ${\pi_{X}}_*\on{DRC}_{X}  ={\pi_{X'}}_*\on{DRC}_{X'} $. Since every two normal modifications can be dominated by a third, this implies the existence of a uniquely determined `limit' of this collection of ${\pi_{X'}}_*\on{DRC}_{X'} $, and we denote this limit by $\overline{\on{DRC}}$.


\begin{theorem}[\ref{main_theorem:comparison_full}]\label{main_theorem:comparison}
Suppose that $k=0$. The class $\overline{\on{DRC}}$ coincides with the extension of the double ramification cycle defined by Li, Graber, and Vakil \cite{Graber2005Relative-virtua} (and computed by \cite{Janda2016Double-ramifica}). \end{theorem}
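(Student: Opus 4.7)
The plan is to compare $\bar{\on{DRC}}$ with the Li--Graber--Vakil class by finding a common intermediate object, namely a suitable blowup $X$ of $\bar{\ca M}_{g,n}$ through which the stabilization map from the moduli of rubber maps factors in a controlled way. Let $\ca M^{\textrm{rub}}$ denote the moduli stack of rubber maps to $\bb P^1/\bb C^*$ with ramification profile given by the $a_i$ (in the case $k=0$), equipped with its virtual fundamental class $[\ca M^{\textrm{rub}}]^{\textrm{vir}}$. The LGV class is the pushforward of $[\ca M^{\textrm{rub}}]^{\textrm{vir}}$ along the proper stabilization map $s\colon \ca M^{\textrm{rub}} \to \bar{\ca M}_{g,n}$. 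The key geometric observation is that a rubber map is, up to the $\bb C^*$-action, the same data as a trivialisation of the pulled-back line bundle $\ca O(\sum_i a_i x_i)$; so tautologically a rubber map determines a lift of the rational Abel--Jacobi map to the unit section of the universal jacobian.

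First I would use this tautological lift to produce, for a sufficiently large normal blowup $X$ in the directed system of \ref{main_theorem:proper}, a morphism $\tilde s\colon \ca M^{\textrm{rub}} \to \mathring X$ factoring $s$. By construction $\tilde s$ lands inside the double ramification locus $\on{DRL}_X$, since the rubber structure gives a genuine trivialisation of the pulled-back bundle. Using that the tower of normal blowups with $\on{DRL}_X \to \bar{\ca M}_{g,n}$ proper is cofinal, I can arrange $X$ so that $\tilde s\colon \ca M^{\textrm{rub}} \to \on{DRL}_X$ is defined and proper, and so that $\on{DRC}_X$ is in the stable range of \ref{main_theorem:limit}. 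It then suffices to prove the identity $\tilde s_* [\ca M^{\textrm{rub}}]^{\textrm{vir}} = \on{DRC}_X$ in $\on{CH}_{\bb Q}(\on{DRL}_X)$; pushing forward along $\on{DRL}_X \to \bar{\ca M}_{g,n}$ will then give the desired equality of cycles.

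To prove this identity I would compare two obstruction theories over $X$. On the rubber side, the perfect obstruction theory used by LGV is (by a result going back to Li) equivalent near the smooth locus to the obstruction theory governing deformations of the trivialisation of $\ca O(\sum a_i x_i)$, i.e.\ deformations that keep the lift to the universal jacobian landing on the unit section. On the author's side, $\on{DRC}_X$ is by definition the refined Gysin pullback of the unit section of the smooth group scheme $J/X$ along the extended section $\sigma_X$. These two constructions should be compared via the virtual pullback formalism of Manolache: the Abel--Jacobi diagram
\begin{equation*}
\begin{tikzcd}
\ca M^{\textrm{rub}} \ar[r,"\tilde s"] \ar[d] & \mathring X \ar[d,"\sigma_X"] \\
X \ar[r,"e"] & J
\end{tikzcd}
\end{equation*}
is cartesian on the locus where $\sigma_X$ is the unit section, so one reduces to showing that the virtual class on $\ca M^{\textrm{rub}}$ agrees with the pullback of $[\mathring X]$ along $e$, a verification that is purely local on $X$.

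The main obstacle will be precisely this identification of obstruction theories. Away from the boundary of $\bar{\ca M}_{g,n}$ the equality is immediate because both classes agree with the naive cycle-theoretic pullback on smooth curves. Over the boundary, one must check that the resolution $X$ absorbs exactly the discrepancy between the LGV obstruction theory on $\ca M^{\textrm{rub}}$ and the simple Gysin pullback along the extended section $\sigma_X$. Concretely, this amounts to showing that the natural map from the intrinsic normal cone of $\ca M^{\textrm{rub}}$ relative to $X$ into the pullback of the tangent bundle of $J/X$ is an isomorphism — a calculation that rests on a careful tracking of the logarithmic structures at nodal degenerations, and on the fact that for $k=0$ the relevant expected dimensions already match, so no excess intersection term appears.
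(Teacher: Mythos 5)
Your overall shape is right: the rubber space tautologically carries a lift of the Abel--Jacobi map landing in the unit section, hence (after normalising) is $\sigma$-extending and factors through the resolution, and the comparison reduces to matching the LGV virtual class against the Gysin pullback of $[e]$. But the mechanism you propose for that matching has a genuine gap, and it is exactly where the work lies. Your square is not cartesian: the fibre product of $\mathring X \xrightarrow{\sigma_X} J \xleftarrow{e} X$ is $\on{DRL}_X$, not $\ca M^{\textrm{rub}}$. The rubber space is the fibre product of $e$ with the \emph{larger} stack $\bar M(\cl P)$ of stable maps to (expansions of) $[\bb P^1/\bb G_m]$ --- without the rubber condition --- and it is this stack, proper and \emph{birational} over $\Mbar$, that is the missing intermediate object. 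The Cavalieri--Marcus--Wise identification of obstruction theories is a statement about the obstruction theory of $\ca M^{\textrm{rub}} \to \bar M(\cl P)$ being pulled back from $N_{e/J}$; it is not something you can verify ``locally on $X$'', because relative to $X$ the rubber space has components of excess actual dimension and $\tilde s$ is far from birational. (Contrary to your last remark, matching \emph{expected} dimensions does not prevent excess intersection: the actual dimensions jump over the boundary, which is why the virtual class is needed at all, and no isomorphism of normal cones relative to $X$ holds on those components.)

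The correct transfer of classes is not a local cone computation but an application of Costello's pushforward theorem. Form two cartesian squares over $J$: one with bottom row $\bar M(\cl P)' \to \Md \to J$ (the first arrow supplied by the universal property of $\Md$ applied to the normalisation $\bar M(\cl P)'$, which is needed since $\bar M(\cl P)$ need not be normal), and one with bottom row $\bar M(\cl P)' \to \bar M(\cl P) \to J$. In each, the bottom horizontal arrow is proper of degree $1$, so Costello's theorem says the Gysin pullbacks of $[e]$ push forward to each other: this gives ${f_\loz}_* D' = \on{DRC}_\loz$ and $f_* D' = D$ with $D$ the LGV class, and pushing everything to $\Mbar$ finishes the proof. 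Without $\bar M(\cl P)$ and Costello's theorem, your plan stalls at the step ``$\tilde s_*[\ca M^{\textrm{rub}}]^{\mathrm{vir}} = \on{DRC}_X$'': the identity is true, but only as a consequence of the above, not of the purely local verification you describe.
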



\subsection*{Strategy of proof}
The proofs of \ref{main_theorem:proper,main_theorem:limit} proceed by constructing a `universal' stack over $\Mbar_{g,n}$ making the map $\sigma$ extend. More formally, we call a morphism $t\colon T \to \Mbar_{g,n}$ \emph{$\sigma$-extending} if the section $\sigma$ can be extended to $J$ after pulling back to $T$. For this to make sense we need the pullback of $\ca M_{g,n}$ to be dense in $T$, and we also require $T$ to be normal and to admit a resolution of singularities (the latter being automatic in characteristic zero by \cite{Hironaka1964Resolution-of-s}). 

Our \emph{universal resolution of the Abel-Jacobi map} will then be a terminal object in the category of $\sigma$-extending morphisms. In \ref{cor:universal_property_md} we prove existence; we denote this object $\Md_{g,n}$. The section $\sigma$ extends over $\Md_{g,n}$ by definition, and moreover $\Md_{g,n}$ is the `universal' stack over $\Mbar_{g,n}$ over which $\sigma$ extends. Note that $\Md_{g,n}$ depends non-trivially on the ramification data $a_1, \dots, a_n$. 

The proof of existence of $\Md_{g,n}$ is constructive, and equips it with a logarithmic structure making it logarithmically \'etale over $\Mbar_{g,n}$. The construction is given locally using toric geometry; we write down explicit combinatorial recipes for the fans of toric varieties, and glue them to build $\Md_{g,n}$. The combinatorial recipe is unavoidably rather complicated, but is amenable to implementation on a computer; in this way we produced \ref{fig:fan_Md} showing one of these fans. 

In \ref{sec:universal_bundle} we also give an explicit description of the universal line bundle on the universal curve over $\Md_{g,n}$ in terms of this toric data, and in \ref{sec:proper_general_case} we use this to prove the key properness result needed for \ref{main_theorem:proper} (though in characteristic zero there is a simpler proof, see \ref{sec:char_0_properness}). 

The morphism $\Md_{g,n} \to \Mbar_{g,n}$ may be viewed as a universal resolution of the indeterminacies of the Abel-Jacobi map. This solves a problem proposed by Grushevsky and Zakharov \cite[Remark 6.3]{Grushevsky2012The-zero-sectio}. Together with our other results, it seems also to solve a problem of Cavalieri, Marcus and Wise proposed in \cite[Section 1.4]{Cavalieri2011Polynomial-fami}. 

To conclude the proofs of the main results, we will choose a suitable compactification $\MD_{g,n}$ of $\Md_{g,n}$. Now if $X \to \Mbar_{g,n}$ is any normal modification which factors via a map $f\colon X \to \MD_{g,n}$, we will verify that
\begin{equation*}
\mathring X = f^{-1} \Md_{g,n}, \,\,\,\;\; \on{DRL}_X = f^{*}\on{DRL}_{\MD_{g,n}} \text{, and } f_*\on{DRC}_X = \on{DRC}_{\MD_{g,n}}. 
\end{equation*}
(here $\mathring X$ still denotes the largest open of $X$ on which the rational map $\sigma$ can be extended to a morphism). 
We will establish that $\on{DRL}_\loz$ is proper over $\bar{\ca M}_{g,n}$, whereupon the analogous properness result will hold for any normal modification which factors via $\MD_{g,n}$, establishing \ref{main_theorem:proper}. \Cref{main_theorem:limit} will then follow fairly formally. Finally, when $k=0$ we use the deformation-theoretic tools of Marcus, Wise and Cavalieri to establish \ref{main_theorem:comparison}.

\subsubsection*{A simple formula for the double ramification cycle}

Suppose we work over a field of characteristic zero, or perhaps more generally over a ring $R$ such that resolution of singularities is known for all schemes of finite type over $R$. Then there is a simpler way to prove the \emph{existence} of the universal $\sigma$-extending morphism: the normalisation of the closure of the schematic image of $\sigma$ in $J$ satisfies the universal property (details are given in \ref{sec:char_0_properness}). This does not tell us a huge amount since we have no explicit description of this closure; for example, it is a-priori far from clear that it admits a log structure making it log etale over $\Mbar$. However, it does allow us to simplify the proof of \ref{main_theorem:proper}; details are given in \ref{sec:char_0_properness}. 

This allows us to give a very simple formula for the extension of the double ramification cycle. We define $\on{DR}_{naive}$ to be the cycle on $\Mbar_{g,n}$ obtained by pulling back the schematic image of $\sigma$ along the unit section. Still in characteristic zero, a small argument with the projection formula (of which details can be found in \cite{Holmes2017Jacobian-extens}) shows that the resulting cycle is equal to the one constructed in this paper, and hence also to that of Li, Graber and Vakil. 

Note that we do need resolution of singularities since, in order to verify that the normalisation of the closure of $\sigma$ is $\sigma$-extending, we need in particular that it admits a (local) resolution of singularities, which is in general far from clear. Of course, one could drop from the definition of $\sigma$-extending that $T$ admit a local resolution of singularities, but then our methods break down (in particular we are no longer able to prove the critical \ref{lem:w_aligned_implies_extending}), so we can no longer give an explicit description of the universal object, or equip it with a natural logarithmic structure etc.

\subsection*{Conjectural relationship to a cycle of Pixton}

Given the data $g$, $n$, $k$ and $\ul a$, Pixton introduced a cycle $P^{g,k}_g(A)$ in the tautological ring of $\Mbar_{g,n}$, given in terms of decorated graphs  --- here $A = (a_1 + k, \dots, a_n + k)$; the details of the construction can be found in \cite[section 1.1]{Janda2016Double-ramifica}. The main result of \cite{Janda2016Double-ramifica} shows that, when $k=0$, there is an equality of cycles 
\begin{equation*}
\overline{\on{DRC}}_{\ul a} = 2^{-g}P^{g,0}_g(A)
\end{equation*}
(here we write $\overline{\on{DRC}}_{\ul a}$ to make the dependence on $\ul a = (a_1, \dots, a_n)$ explicit). 
Now that we have a construction of $\overline{\on{DRC}}_{\ul a}$ valid for all $k$, it seems natural to propose
\begin{conjecture}\label{conjecture:pixton}
For all $k$, there is an equality of cycles 
\begin{equation*}
\overline{\on{DRC}}_{\ul a} = 2^{-g}P^{g,k}_g(A)
\end{equation*}
in $\on{CH}^g(\Mbar_{g,n})$. 
\end{conjecture}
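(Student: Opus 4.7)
The plan is to try to reduce the conjecture to the known $k=0$ case (\ref{main_theorem:comparison} combined with \cite{Janda2016Double-ramifica}) through a combination of polynomiality in $\ul a$ and compatibility under boundary restriction.

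The first step would be to establish that $\bar{\on{DRC}}_{\ul a}$ depends polynomially on $\ul a$ for fixed $k$ (subject to the constraint $\sum a_i = k(2g-2)$). Pixton's cycle $P^{g,k}_g(A)$ is manifestly polynomial in $A$, so polynomiality on the geometric side is essentially a necessary condition for the conjecture. To prove it, one would observe that the combinatorial type of the blowup $X\to\Mbar_{g,n}$ resolving the Abel-Jacobi map is determined by the orders of vanishing and of poles of $\omega^{\otimes k}(-\sum_i a_i x_i)$ along boundary components, quantities which are linear functions of $(\ul a, k)$. Inside a chamber of parameter space where these orders vary linearly without changing sign, a single resolution works uniformly, and the resulting class ought to depend polynomially on $\ul a$. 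To upgrade piecewise polynomiality to genuine polynomiality, one would need a separate argument showing $\bar{\on{DRC}}$ is insensitive to the choice of chamber, which is plausible given the eventually-constant behaviour stated in \ref{main_theorem:limit}.

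The second step would be to analyse how both sides restrict to boundary strata $\Mbar_{g_1, n_1+1} \times \Mbar_{g_2, n_2+1} \to \Mbar_{g,n}$ and $\Mbar_{g-1, n+2}\to\Mbar_{g,n}$. Pixton's cycle satisfies explicit splitting formulae under these clutching morphisms. On the geometric side, one would need to show that the resolved Abel-Jacobi map restricts compatibly with the semiabelian degeneration of the universal jacobian, so that $\bar{\on{DRC}}$ decomposes into a sum indexed by flows of the parameters $\ul a$ through the nodes, with each summand a product of DRCs on the factors. Combined with polynomiality, such a splitting would enable an induction on $(g,n)$, matching the inductive structure already present in the definition of $P^{g,k}_g$.

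The main obstacle I expect is the transition from $k=0$ to arbitrary $k$. Unlike the combinatorial formula, where $P^{g,k}_g(A)$ interpolates visibly between values of $k$, the geometric construction offers no obvious mechanism for this interpolation: the line bundles $\omega^{\otimes k}(-\sum a_i x_i)$ for different $k$ induce genuinely different sections of $J$, with different indeterminacy loci and different resolutions. One possible workaround is to compare directly with Gu\'er\'e's construction \cite{Guere2016A-generalizatio}, which is tailored to $k\ge 1$, and then invoke the conjectural Faber-Pandharipande-Schmitt link between Gu\'er\'e's cycle and Pixton's; this would sidestep the need for interpolation, at the cost of importing two further conjectures. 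A more ambitious alternative would be to construct a universal moduli problem parametrising twisted line bundles of degree $k(2g-2)$ simultaneously for all $k$, on which the conjecture becomes a single universal statement rather than an infinite family. Either route requires substantial new input, and this is the step I view as the real heart of the problem.
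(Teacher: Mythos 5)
First, be aware that the paper does not prove this statement: it is posed as \ref{conjecture:pixton}, and the only support offered is indirect (the multiplicativity relation in the small $b$-Chow ring, which yields an independently checkable identity among Pixton's cycles on the compact-type locus). So there is no ``paper proof'' to match yours against; the question is whether your strategy would actually close the gap, and I do not think it does in its present form.

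Two of your three steps contain unproved claims that are themselves substantial open problems, and even granting them the induction does not close. (i) Polynomiality of $\bar{\on{DRC}}_{\ul a}$ in $\ul a$: your chamber argument at best gives piecewise-polynomial behaviour, and \ref{main_theorem:limit} is a statement about a single fixed $\ul a$ --- it says nothing about comparing the classes obtained in different chambers, so ``insensitivity to the chamber'' is asserted, not argued. (ii) The boundary splitting is more delicate than you suggest: the pullback of the double ramification cycle to a boundary divisor is not a finite sum of products of DRCs on the factors. For a non-separating node the set of flows (the weightings $w$ of the paper) is infinite, and Pixton's formula only makes sense of the corresponding sum via the $r$-polynomiality regularisation; producing the geometric counterpart of that regularised splitting is essentially as hard as the conjecture itself. (iii) Even with (i) and (ii), knowing a codimension-$g$ Chow class on the interior $\ca M_{g,n}$ and its restrictions to all boundary strata does not determine it: the joint restriction map has a kernel, so the induction on $(g,n)$ needs a further argument to see that the difference of the two sides vanishes. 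Finally, you correctly identify the passage from $k=0$ to general $k$ as the heart of the matter, but your two proposed workarounds either import additional conjectures (G\'u\'er\'e plus Faber--Pandharipande--Schmitt) or are a research programme rather than an argument. What is missing is a mechanism that expresses the $k$-twisted section $\omega^{\otimes k}(-\sum_i a_i x_i)$ in terms of untwisted data --- for instance by working on a universal Picard-type stack where every degree-$0$ line bundle is, after pullback and adding markings, of the form $\ca O(\sum_i a_i x_i)$, so that the general case becomes a formal consequence of the $k=0$ case in higher $n$. Without an idea of that kind, the proposal is a plausible outline of necessary compatibilities rather than a proof.
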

Some evidence for this conjecture is given in the following section.

\subsection*{Multiplicativity of the double ramification cycle}
In \cite{Holmes2017Multiplicativit} we use the results of this paper to construct an extension of the double ramification cycle to the small $b$-Chow ring of $\Mbar_{g,n}$ --- this is the colimit of the Chow rings of the smooth blowups of $\Mbar_{g,n}$, with transition maps given by pulling back cycles. Note that the `asymptotic' approach we adopt here is essential for this construction. Given vectors $\ul a$, $\ul b$ of ramification data, we will show that the basic multiplicativity relation 
\begin{equation}\label{eq:mult}
\on{DRC}_{\ul a} \cdot \on{DRC}_{\ul b} = \on{DRC}_{\ul a} \cdot \on{DRC}_{\ul a + \ul b} 
\end{equation}
holds in the small $b$-Chow ring of $\Mbar_{g,n}$, but fails in the Chow ring of $\Mbar_{g,n}$. 

One consequence of this is that, if \ref{conjecture:pixton} is true, the relation (\oref{eq:mult})
should also hold for Pixton's cycles on the locus of compact type curves. This relation can be independently checked using known relations in the tautological ring (see \cite{Holmes2017Multiplicativit}), which may be seen as evidence for \ref{conjecture:pixton}. 

\subsection*{Conjectural relationship to a cycle of Janda, Pandharipande, Pixton, and Zvonkine}

In the appendix of \cite{Farkas2016The-moduli-spac}, Janda, Pandharipande, Pixton, and Zvonkine define a cycle $H_{g,a}$ tautological ring of $\Mbar_{g,n}$ in the case where $k=1$ and at least one $a_i<0$, and conjecture that $H_{g,a}$ coincides with Pixton's cycle $P_g^{g,k}(A)$. We are currently engaged (jointly with Johannes Schmitt) in verifying the equality $H_{g,a} = \overline{\on{DRC}}_{\ul a}$, which may be viewed as a step towards \ref{conjecture:pixton}, or towards the conjecture of Janda, Pandharipande, Pixton, and Zvonkine, or both.

\subsection*{Comparison to the approach of Li, Graber and Vakil}


The approach of Li \cite{Li2001Stable-morphism}, \cite{Li2002A-degeneration-}, Graber, and Vakil \cite{Graber2005Relative-virtua} when $k=0$ is based on thinking of the DRC as the locus of curves admitting a map to $\bb P^1$ with specified ramification over $0$ and $\infty$. They define a stack of stable maps to `rubber $\bb P^1$', i.e. to $[\bb P^1 / \bb G_m]$. They then define the DRC as the pushforward of a virtual fundamental class from this stack of stable maps. This enables them to apply the well-developed machinery of virtual classes and spaces of stable maps. In contrast, our approach is in a sense very naive; using blowups to resolve the indeterminacies of rational maps goes back to classical algebraic geometry (and in the non-proper case to Raynaud and Gruson \cite{Raynaud1971Criteres-de-pla}). The more elementary nature of our approach makes it very easy to extend to the case $k \neq 0$, and we hope will allow further extensions; we are particularly interested in developing further the Gromov-Witten theory of $B \bb G_m$, extending the results of \cite{Frenkel2009Gromov-Witten-g} beyond the `admissible' case. 

\subsection*{Comparison to the cycle of Kass and Pagani}

The preprint \cite{Kass2017The-stability-s} of Kass and Pagani (posted on the same day as the first version of this preprint) gives a 
very different approach to resolving the Abel-Jacobi map, by studying families of stability conditions on the space of rank 1 torsion-free sheaves. Moving through a suitable family produces a series of flips of a certain compactified jacobian, after which the Abel-Jacobi map extends over $\overline{\mathcal{M}}_{g,n}$ (this series of flips depends on the ramification data). In essence, we modify the source of the Abel-Jacobi map, whereas Kass and Pagani modify the target. In this way, they produce a number of different extensions of the DRC. In \cite{Holmes2017Jacobian-extens} we show that, for certain choices of stability conditions, the resulting double ramification cycle coincides with that given in this article. 

\subsection*{Comparison to some other recent results}


More recently, Marcus and Wise \cite{Marcus2017Logarithmic-com} have given another approach to resolving the Abel-Jacobi map when $k=0$, rather closer in spirit to the present preprint. They also use logarithmic geometry to modify $\Mbar_{g,n}$, but their construction is based on stacks of stable maps rather than a universal property as in the present preprint. We hope to understand the relation between these approaches more fully in future. 

An extension of the Abel-Jacobi map over a large locus in $\Mbar_{g,n}$ (when $k=0$) was produced some time ago by Bashar Dudin \cite{Dudin2015Compactified-un}. His locus depended on a choice or ramification data, as in our construction. But he did not make modifications of $\Mbar_{g,n}$, and so was not able to extend over the whole of the boundary. 

\subsection*{Acknowledgements}

I am very grateful to Alessandro Chiodo for many extensive discussions and much encouragement, without which this paper would not have been written. I have also benefitted greatly while writing this paper from discussions with Robin de Jong, Bas Edixhoven, Rahul Pandharipande, David Rydh, Johannes Schmitt, Arne Smeets, Nicolo Sibilla, Mattia Talpo, Jonathan Wise, Paolo Rossi, and many others. I am also grateful to Dimitri Zvonkine for encouraging me to include the extension to the twisted case, and to Nicola Pagani for pointing out that the construction works in the Chow ring with integral (rather than just rational) coefficients. Finally, I would like to thank the anonymous referee for a very helpful report.

\section{Notation and setup}

\subsection*{Base ring}

We work over the fixed base scheme $\Lambda \coloneqq \on{Spec}\bb Z$ equipped with the trivial log structure. The reader who prefers to take $\Lambda = \on{Spec} \bb C$ can freely do so with no modifications to what follows, and a substantial simplification to the proofs of \ref{thm:properness_lozenge} and \ref{lem:existence_of_cpct}. All our constructions commute with arbitrary base-change over $\Lambda$.

%
%
%
\begin{remark}
It seems that the definition of the double ramification cycle given in \cite{Graber2005Relative-virtua} can readily be extended over $\bb Z$, though we have not verified this. The computation of \cite{Janda2016Double-ramifica} is carried out over $\bb C$, and gives the class of the double ramification cycle as the value at zero of a polynomial $P$ in a variable $r$. The value of $P$ at a given value of $r$ is computed using $r$-th roots of line bundles (cf. Chiodo's formulae \cite{Chiodo2008Towards-an-enum}), which may give problems in characteristic dividing $r$. But in fixed characteristic $\ell$, the polynomial $P$ is completely determined by its values on integers coprime to $\ell$, so this problem can be circumvented (the author is grateful to Felix Janda for pointing this out). So it remains likely that the results of \cite{Janda2016Double-ramifica} can be extended to arbitrary characteristic. 
\end{remark}


\subsection*{Stack of weighted stable curves}
For us, `curve' means proper, flat, finitely presented, with reduced connected nodal geometric fibres. Rather than treating each $\Mbar_{g,n}$ and weighting $a_1, \dots, a_n$ separately, we denote by $\Mbar$ the stack of stable pointed curves together with a $k$-twisted integer weighting --- in other words, points of $\Mbar$ consist of tuples 
\begin{equation*}
(C, x_1, \dots, x_n, a_1, \dots, a_n, k)
\end{equation*}
where the $x_i$ are the marked sections of our stable curve, and the $a_i$ and $k$ are integers satisfying
\begin{equation*}
\sum_i a_i  = k(2g-2). 
\end{equation*}
This stack is smooth over $\Lambda$, but is far from being connected --- it is a countably infinite disjoint union of substacks, each proper over $\Lambda$. 


%


We write $\ca C/\Mbar$ for the universal curve, and $\ca J = \on{Pic}^0_{\ca C/\Mbar}$ for the universal jacobian (a semiabelian scheme, the fibrewise connected component of the identity in $ \on{Pic}_{\ca C/\Mbar}$). Let $\ca M$ denote the open substack of $\Mbar$ parametrising smooth curves. We write $x_i$ for the tautological sections, and $\Sigma$ for the Cartier divisor on $\ca C$ given by $\sum_i a_i x_i$. Then $\sigma \in \ca J_{\ca M}(\ca M)$ denotes the tautological section given by $\omega^{\otimes k}_{\ca C}(-\Sigma)$.

\subsection*{Log structures}

We work with log structures in the sense of Fontaine-Illusie, using Olsson's generalisation to stacks \cite{Olsson2001Log-algebraic-s}. We put log structures on $\ca C/\Mbar$ following Kato \cite{Kato1996Log-smooth-defo}, and the log structure on $\Mbar$ will be denoted $\alpha_{\Mbar}\colon P_{\Mbar} \to \ca O_{\Mbar}$, etc. 



If $P$ is a (sheaf of) monoid(s), we write $\bar P \coloneqq P / P^\times$; this notation does not sit well with the notation $\Mbar$ for the moduli stack of stable curves, but both are very standard, and there is no actual ambiguity.


\subsection*{Weightings on a graph}
A graph consists of a finite set $V$ of vertices, a finite set $H$ of half-edges, a map `$\on{end}$' from the half-edges to the vertices, an involution $i$ on the half-edges, a \emph{genus} $g\colon V \to \bb Z_{\ge 0}$, and a \emph{twist} $k\in \bb Z$. Graphs are assumed connected unless stated otherwise, and the genus of a graph is its first Betti number plus the sum of the genera of the vertices. 

\emph{Self-loops} are when two distinct half-edges have the same associated vertex and are swapped by $i$. We define \emph{edges} as sets $\{h, h'\}$ (of cardinality 2) with $i(h) = h'$. \emph{Legs} are fixed points of $i$. A \emph{directed edge} $h$ is a half-edge that is not a leg; we call $\on{end}(h)$ its \emph{source} and $\on{end}(i(h))$ its \emph{target}, and sometimes write it as $h\colon \on{end}(h) \to \on{end}(i(h))$. We write $E = E(\Gamma)$ for the set of edges, and $\stackrel{\to}{E}$ for the set of directed edges. 

The \emph{valence} $\on{val}(v)$ of a vertex is the number of non-leg half-edges incident to it, and we define the \emph{canonical degree} $\kappa(v) = 2g(v) - 2 + \on{val}(v)$, so that 
\begin{equation*}
2g(\Gamma) - 2 = \sum_v \kappa(v). 
\end{equation*}


 A \emph{closed walk} in $\Gamma$ is a sequence of directed edges so that the target of one is the source of the next, and which begins and ends at the same vertex. We call it a \emph{cycle} if it does not repeat any vertices or (undirected) edges.

\begin{definition}\label{def:weighting}
A \emph{$G$-weighting} is a function $w$ from the half-edges to a group $G$ such that:
\begin{enumerate}
\item If $i(h) = h'$ and $h \neq h'$ then $w(h) +w(h') =0$;
\item For all vertices $v$, $\sum_{\on{end}(h) = v} w(h) +  k\kappa(v)$ = 0. 
\end{enumerate}
\end{definition}
When the twist $k = 0$, a $G$-weighting can be though of as a flow of an incompressible fluid around the graph. 

A $G$-leg-weighted graph is a graph together with a function from the legs to $G$, such that the sum over all the legs is $-k(2g(\Gamma) - 2)$. If $\Gamma$ is a $G$-leg-weighted graph we write $W(\Gamma)$ for the set of weights on $\Gamma$ which restrict to the given values on the legs. It is easy to see that $W(\Gamma)$ is never empty (this uses the running assumption of connectedness). After choosing an oriented basis of $\on H_1(\Gamma, \bb Z)$, the set $W(\Gamma)$ becomes a torsor under $\on H_1(\Gamma, G)$. In this article we will use only the cases $G = \bb Z$ and $G = \bb Q$. 
We will refer to weightings taking values in $\bb Z$ just as \emph{weightings}. 

\subsection*{Combinatorial charts}

If $p \colon \on{Spec} k \to \Mbar$ is a geometric point, we have an associated graph $\Gamma_p$. If a node of the curve over $p$ has local equation $xy - r$ for some $r \in \ca O^{et}_{\Mbar, p}$, then the image of $r$ in the monoid $\bar P_{\Mbar, p}$ is independent of the choice of local equation. In this way we define a map $\ell\colon E(\Gamma_{p}) \to \bar P_{\Mbar, p}$, recalling that edges of the graph correspond to nodes of the curve. This is a logarithmic version of the labelling defined in \cite{Holmes2014Neron-models-an}. 

Given a leg-weighted graph $\Gamma$ with edge set $E$, set $\Mbar_\Gamma = \on{Spec}\Lambda[\bb N^E]$, equipped with the toric divisorial log structure. As usual we write $\alpha\colon P_{\Mbar_\Gamma} \to \ca O_{\Mbar_\Gamma}$ for the map from the sheaf of monoids to the structure sheaf. 

To any point $p$ in $\Mbar_\Gamma$ we associate the graph $\Gamma_p$ obtained from $\Gamma$ by contracting exactly those edges $e$ such that the corresponding basis elements of $\bb N^E$ specialise to units at $p$. We define a map $\ell\colon E(\Gamma_{p}) \to \bar P_{\Mbar_\Gamma, p}$ by sending an edge $e$ to the image of the associated basis element of $\bb N^E$. The map naturally lifts to $P_{\Mbar_\Gamma, p}$, and does not send any edge to a unit, by definition of $\Gamma_p$. 

A \emph{combinatorial chart of $\Mbar$} consists of a leg-weighted graph $\Gamma$ and a diagram of log stacks
\begin{equation*}
\Mbar \stackrel{f}{\longleftarrow} U \stackrel{g}{\longrightarrow} \Mbar_\Gamma
\end{equation*}
satisfying the following five conditions:
\begin{enumerate}
\item $U$ is a connected log scheme
\item $g\colon U \to \Mbar_\Gamma$ is strict and log smooth
\item $f\colon U \to \Mbar$ is strict and log \'etale
\item the image of $g$ meets the minimal stratum of $\Mbar_\Gamma$. 
\end{enumerate}
Let $p\colon \on{Spec}k \to U$ be any geometric point, yielding natural maps 
\begin{equation*}
\bar P_{\Mbar, f \circ p} \stackrel{f^\flat}{\to} \bar P_{U, p} \stackrel{g^\flat}{\leftarrow} \bar P_{\Mbar_\Gamma, g \circ p}. 
\end{equation*}
\begin{enumerate}[resume*]
\item 
We require the existence of an isomorphism 
\begin{equation*}
\phi_p\colon \Gamma_{f \circ p} \to \Gamma_{g \circ p}
\end{equation*}
such that
$f^\flat(\ell(e)) = g^\flat (\ell(\phi_p(e))$ for every edge $e$ (which necessarily makes this $\phi_p$ unique if it exists). Moreover, the map $\phi_p$ sends the leg-weighting on $\Gamma_{f \circ p}$ coming from the $-a_i$ to the leg-weighting on $\Gamma_{g \circ p}$ coming from that on $\Gamma$. 
\end{enumerate}



It is clear from \cite{Kato2000Log-smooth-defo} that $\Mbar$ can be covered by combinatorial charts. We will first construct $\Md_\Gamma$ over the $\Mbar_\Gamma$, and then descend it to $\Mbar$. 


\section{Construction of $\Md$}\label{sec:construction}

\subsection*{The cone $c_w$ associated to a weighting}

For the remainder of this subsection we fix a combinatorial chart with leg-weighted graph $\Gamma$, writing $E$ for the edge set. To a weighting $w \in W(\Gamma)$ we will associate a rational polyhedral cone $c_w$ inside the positive orthant of $\bb Z^E \otimes_{\bb Z} \bb Q$. Recalling that $\Mbar_\Gamma  = \on{Spec} \Lambda[\bb N^E]$, such a cone will induce an affine toric scheme over $\Mbar_\Gamma$ in the usual way, cf. \cite{Fulton1993Introduction-to}. Such an object has  a natural log structure. We will build $\Md_\Gamma$ by glueing together affine patches of this form. 

Fix a weighting $w \in W(\Gamma)$ and let $\gamma$ be an oriented cycle in $\Gamma$. If $e$ is a directed edge appearing in $\gamma$, we define $w_\gamma(e)$ to be the value of $w$ on the first half-edge of $e$ --- we might think of this as the flow along $e$ in the direction given by $\gamma$. 

\begin{definition}\label{eq:c_w}
Let $t \in \bb Q_{\ge 0}^E$; we refer to such an element as a \emph{thickness}. We say $t$ is \emph{compatible with $w$} if for every cycle $\gamma$ we have 
\begin{equation}\label{eq:compat}
\sum_{e \in \gamma} w_\gamma(e) t(e) = 0. 
\end{equation}
\end{definition}
One checks easily that the set of all thicknesses $t$ which are compatible with a given weighting $w$ form a rational polyhedral cone in $\bb Q_{\ge 0}^E$, which we denote by $c_w$. 

\begin{lemma}\label{rem:common_nonzero_thickness_implies_equal}
Suppose $w, w' \in W(\Gamma)$ and that $c_w \cap c_{w'}$ contains a thickness $t$ which does not vanish on any edge. Then $w = w'$. 
\end{lemma}
\begin{proof}
Writing $\tilde w = w - w'$, we see that $\tilde w$ is a `weighting' for the graph $\Gamma$ with all the leg-decorations removed; we can think of this $\tilde w$ as a flow of an incompressible fluid around $\Gamma$, with no sources and sinks. Suppose that $\tilde w$ is not everywhere zero. We then claim that there exists a directed cycle $\gamma$ in $\Gamma$ such that, for every directed edge $e \in \gamma$, we have $\tilde w_\gamma(e) >0$. To build such a cycle, we begin on any directed edge with positive flow $\tilde w$. The incompressibility condition then implies that we can continue the path along another edge, still having $\tilde w >0$. Continuing in this way, the finiteness of the graph forces this path to intersect itself at some point. Possible discarding the beginning of this path, we have the desired cycle $\gamma$. Then 
\begin{equation*}
\sum_{e\in \gamma} \tilde w_\gamma(e)t(e) = 0, 
\end{equation*}
but all the $t(e)>0$, and all the $\tilde w_\gamma(e) >0$, a contradiction. 
\end{proof}


\begin{definition}\label{def:fans}
We write $F_\Gamma$ for the set of faces of the cones $c_w$ as $w$ runs over weightings in $W(\Gamma)$. 
\end{definition}

\begin{remark}[Example: 2-gon, $k=0$]
Suppose the graph $\Gamma$ has two edges and two (non-loop) vertices $u$ and $v$. Suppose the leg weighting is $+n$ at $u$ and $-n$ at $v$. Weightings consist of a flow of $a$ along one edge from $u$ to $v$, and $n-a$ along the other (again from $u$ to $v$), for $a \in \bb Z$. The cone $c_w$ is non-zero if and only if both $a$ and $n-a$ are non-negative, and for such $a$ the cone $c_w$ is the ray in $\bb Q_{\ge 0}^2$ through the point $(n-a, a)$. Thus we get exactly $n+1$ rays in the positive quadrant. This is the fan $F_\Gamma$. 
\end{remark}

\begin{remark}[Example: 3-edge banana, $k=0$]
Suppose again that the graph has two vertices $u$ and $v$, but now three edges between them. Suppose that the weighting is $+10$ on $u$ and $-10$ on $v$. \Cref{fig:fan_Md} shows the slice through the incomplete fan $F_\Gamma \sub \bb Q_{\ge 0}^3$ where the sum of the values of the thickness on the edges is 1. 
\end{remark}

In the next subsection we will verify that $F_\Gamma$ is a \emph{finite} fan (in the sense of toric geometry). The reader might prefer to skip the details, as they play little role in what follows. 

\begin{figure}
\centerline{\includegraphics[scale = 0.5]{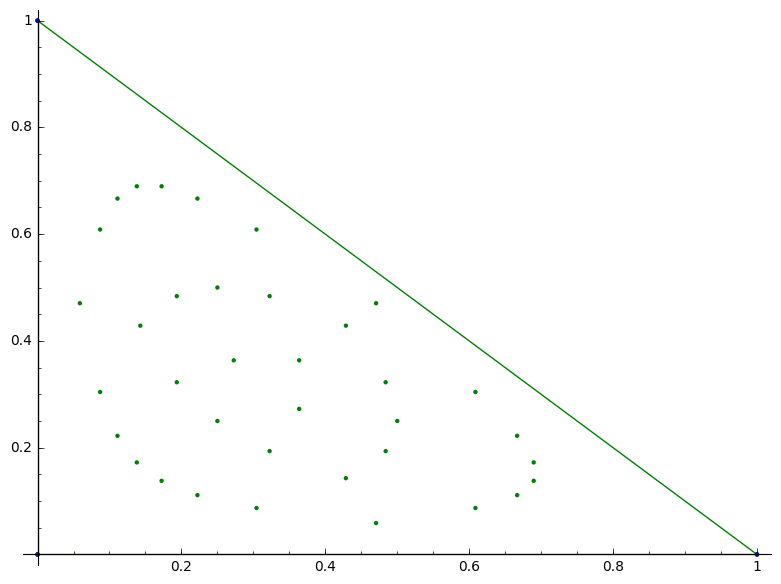}}
\caption{Slice of $F_\Gamma$}\label{fig:fan_Md}
\end{figure}


\subsection*{$F_\Gamma$ is a finite fan}
%
%
%


\begin{lemma}\label{lem:cones_intersect_in_face}
Let $w_1$, $w_2 \in W(\Gamma)$ be two weightings. Then the intersection of the cones $c_{w_1}$ and $c_{w_2}$ is a face of $c_{w_1}$. 
\end{lemma}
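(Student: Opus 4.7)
The plan is to identify $c_{w_1} \cap c_{w_2}$ as a specific face of $c_{w_1}$, namely the one obtained by forcing $t(e) = 0$ on the support of $v \coloneqq w_1 - w_2$. Since $w_1$ and $w_2$ share the same leg-weighting and twist, $v$ vanishes on legs and satisfies $\sum_{\on{end}(h) = u} v(h) = 0$ at each vertex, so $v$ is a $\bb Q$-weighting with trivial leg data and trivial twist; equivalently, $v$ is an element of the cycle space $H_1(\Gamma, \bb Q)$. Setting $S \coloneqq \{e \in E : v(e) \neq 0\}$, the subset
\begin{equation*}
F_S \coloneqq \{t \in c_{w_1} : t(e) = 0 \text{ for every } e \in S\}
\end{equation*}
is manifestly a face of $c_{w_1}$: the defining inequalities of $c_{w_1}$ are the non-negativity constraints $t(e) \ge 0$, and setting any subset of these to equality cuts out a face. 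The goal is then to prove $c_{w_1} \cap c_{w_2} = F_S$.

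The inclusion $F_S \subseteq c_{w_1} \cap c_{w_2}$ is almost immediate: for $t \in F_S$ and any combinatorial cycle $\gamma$, the sum $\sum_{e \in \gamma} (w_2)_\gamma(e) t(e)$ only receives contributions from edges $e \notin S$, and on such edges $w_1$ and $w_2$ agree, so the sum equals $\sum_{e \in \gamma}(w_1)_\gamma(e) t(e) = 0$.

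The harder direction is where I expect the real work. After fixing an orientation $\vec e$ for each edge, both weightings and $\bb Z$-linear combinations of combinatorial cycles become vectors in $\bb Q^E$, and the pairing satisfies $\sum_{e \in \gamma} w_\gamma(e) t(e) = \sum_e \gamma(e) w(e) t(e)$. Subtracting the cycle conditions for $w_1$ and $w_2$ gives $\sum_e \gamma(e) v(e) t(e) = 0$ for every combinatorial cycle $\gamma$, and by $\bb Q$-linearity (combinatorial cycles span $H_1(\Gamma, \bb Q)$) for every element of $H_1(\Gamma, \bb Q)$. The decisive step is to feed $v$ itself back in as the test cycle, yielding the sum of squares
\begin{equation*}
\sum_e v(e)^2 t(e) = 0.
\end{equation*}
Each summand is non-negative, so all of them vanish; this forces $t(e) = 0$ on $S$, establishing $c_{w_1} \cap c_{w_2} \subseteq F_S$. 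The main obstacle is less computational than conceptual: one must recognise that $v$ — originally introduced only to measure how $w_1$ and $w_2$ differ — is in its own right a legitimate test class in $H_1(\Gamma, \bb Q)$, at which point the combined cycle conditions collapse into a sum of squares and immediately kill $t$ on the required edges.
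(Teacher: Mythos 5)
Your proof is correct, and it identifies the intersection with the same face that the paper does --- the face of $c_{w_1}$ cut out by $t(e)=0$ for $e$ in $E_{\neq}=\{e: w_1(e)\neq w_2(e)\}$ --- but the decisive step is argued by a genuinely different (and cleaner) route. The paper shows that any $t\in c_{w_1}\cap c_{w_2}$ vanishes on $E_{\neq}$ by contracting the edges where $t$ vanishes and invoking \ref{rem:common_nonzero_thickness_implies_equal}, which is itself stated without proof; your sum-of-squares argument, which pairs the relation $\sum_e\gamma(e)(w_1-w_2)(e)t(e)=0$ against the class $v=w_1-w_2\in\on{H}_1(\Gamma,\bb Q)$ itself, proves this vanishing directly and in effect supplies the missing proof of that remark (take $t$ nowhere vanishing to recover it). Your formulation also sidesteps the paper's need to compare cycles of $\Gamma$ with closed walks in the contracted graph $\Gamma_=$ when proving the easy inclusion, since your face $F_S$ is defined inside $c_{w_1}$ from the start rather than via a contracted cone. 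The only ingredient worth flagging is the standard graph-theoretic fact you use twice: that the simple cycles span $\on{H}_1(\Gamma,\bb Q)\subseteq\bb Q^E$, and that a divergence-free flow vanishing on the legs lies in that span; both follow from the fundamental-cycle basis attached to a spanning tree, and the paper itself relies on the same fact elsewhere (e.g.\ in the proof of \ref{lem:existence_of_positive_cycle}), so this is not a gap.
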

\begin{proof}
Let $t \in c_{w_1} \cap c_{w_2}$. For an edge $e$ with $t(e) \neq 0$ we claim $w_1(e) = w_2(e)$. To see this, let $\Gamma_t$ be the graph obtained from $\Gamma$ by contracting those edges on which $t$ vanishes. Then each $w_i|_{\Gamma_t}$ is a weighting compatible with $t|_{\Gamma_t}$, and the latter does not vanish on any edge, so $w_1|_{\Gamma_t} = w_2|_{\Gamma_t}$ by \ref{rem:common_nonzero_thickness_implies_equal}. 

Define
\begin{equation*}
E_= = \{ e \in E(\Gamma) : w_1(e) = w_2(e)\}, \;\;\;\; E_{\neq} = E(\Gamma) \setminus E_=. 
\end{equation*}
Writing $\Gamma_=$ for the graph obtained from $\Gamma$ by contracting $E_{\neq}$, and set $w_= \coloneqq w_1|_{\Gamma_=} = w_2|_{\Gamma_=}$. Define $c_{=}$ to be the cone in $\bb Q_{\ge 0}^{E_=}$ corresponding to the weighting $w_=$. 
By the claim above we see that every $t \in c_w \cap c_{w'}$ vanishes on every edge of ${E_{\neq}}$. 


I now claim that $c_{w_1} \cap c_{w_2} = c_= \times \{ \mathbf{0}\}$, where $\mathbf{0}$ is the zero vector in $\bb Q_{\ge 0} ^{E_{\neq}}$. Well, it is clear that $c_{w_1} \cap c_{w_2} \subseteq c_= \times \{ \mathbf{0}\}$ by definition of $E_=$ and $E_{\neq}$. For the other inclusion, let $t \in c_= \times \{ \mathbf{0}\}$, and let $\gamma$ be a cycle in $\Gamma$. Write $\gamma_=$ for the closed walk in $\Gamma_=$ resulting from contracting $\gamma$. By definition of $c_=$ we have that 
\begin{equation*}
\sum_{e \in \gamma'} w_=(e)t(e) = 0, 
\end{equation*}
and $t$ vanishes on $E_{\neq}$, so we see 
\begin{equation*}
\sum_{e \in \gamma} w_1(e)t(e) = 0 = \sum_{e \in \gamma} w_2(e)t(e) = 0
\end{equation*}
 as required. 
\end{proof}

Now we know that any two $c_w$ intersect in a face. The following well-known lemma shows that $F_\Gamma$ is a fan. 
\begin{lemma}\label{lem:intersect_in_faces_gives_fan}
Let $\Phi_0$ be a set of cones in $\bb Q^n$, and assume that for all $C$, $C' \in \Phi_0$, the intersection $C \cap C'$ is a face of $C$. Let $\Phi$ be the set of all faces of cones in $\Phi_0$. Then $\Phi$ is a fan. 
\end{lemma}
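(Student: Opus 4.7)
The goal is to verify the two defining axioms of a fan for the collection $\Phi$: (i) every face of a cone in $\Phi$ is again in $\Phi$, and (ii) the intersection of any two cones in $\Phi$ is a common face of both. Axiom (i) is immediate from the definition of $\Phi$, since a face of a face is a face. So the whole content of the lemma is axiom (ii), and the plan is to reduce it to the hypothesis about $\Phi_0$ by passing to the ambient cones and repeatedly applying the standard facts about faces of rational polyhedral cones.

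The three standard facts I want to use are: (a) a face of a face of $C$ is a face of $C$; (b) the intersection of two faces of $C$ is a face of $C$; and (c) if $F$ is a face of $C$ and $G \subseteq F$ is a subcone, then $G$ is a face of $C$ if and only if $G$ is a face of $F$. Given $D, D' \in \Phi$, choose $C, C' \in \Phi_0$ with $D$ a face of $C$ and $D'$ a face of $C'$. By hypothesis, $C \cap C'$ is a face of both $C$ and $C'$.

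The first step is to show $D \cap C'$ is a face of $D$. Because $D \subseteq C$, we have the identity $D \cap C' = D \cap (C \cap C')$, which exhibits $D \cap C'$ as the intersection of two faces of $C$; by (b) it is a face of $C$, and since it is contained in $D$, fact (c) shows it is a face of $D$. Moreover, being contained in $C \cap C'$, which is a face of $C'$, it is also a face of $C'$ by the same argument applied with $C$ replaced by $C'$. The second step is analogous: since $D' \subseteq C'$, we have $D \cap D' = D' \cap (D \cap C')$, which is the intersection of two faces of $C'$, hence a face of $C'$ by (b); being contained in $D'$ it is a face of $D'$, and being contained in $D \cap C'$ (already shown to be a face of $D$) it is a face of $D$ by (c) applied inside $D$.

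The argument is essentially formal once the face-theoretic lemmas (a)--(c) are in hand; the only real subtlety is bookkeeping, namely being careful about which ambient cone each intersection is a face of at each stage, and this is what the two-step passage through the auxiliary cone $D \cap C'$ is designed to handle. I do not anticipate any genuine obstacle, as the lemma is a classical piece of toric/combinatorial convex geometry.
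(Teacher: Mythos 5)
The paper offers no proof of this lemma at all---it is introduced as ``the following well-known lemma'' and left to the reader---so there is no argument of the author's to compare against; yours is the standard one and it is correct. Both fan axioms are handled properly, and the reduction of axiom (ii) to the hypothesis on $\Phi_0$ via the auxiliary cone $D\cap C' = D\cap(C\cap C')$ is exactly the right bookkeeping. The one place where the write-up is slightly compressed is the final step: to conclude that $D\cap D'$ is a face of $D$, you should first observe that $D\cap D'$ is a face of $D\cap C'$ (apply (c) \emph{inside $C'$}: you have shown $D\cap D'$ is a face of $C'$ and it is contained in $D\cap C'$, which is a face of $C'$), and then transfer to $D$ by (a), since $D\cap C'$ is a face of $D$. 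As phrased, ``(c) applied inside $D$'' would require already knowing that $D\cap D'$ is a face of $D$, which is the thing being proved; but the missing half-sentence follows immediately from facts you have already established, so this is a presentational slip rather than a genuine gap.
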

%
%

\begin{lemma}\label{lem:finitely_many_cones}
The \emph{set} of cones $\{ c_w: w \in W(\Gamma)\}$ is finite. 
\end{lemma}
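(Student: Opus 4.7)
The plan is to induct on $|E(\Gamma)|$. The base case $|E|=0$ is trivial: $W(\Gamma)$ is a singleton and the only cone is $\{0\}$. For the inductive step, partition the cones into those containing a thickness strictly positive on every edge (call these \emph{full}) and those that do not.

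If $c_w$ is not full, then since $c_w$ is a convex cone in $\bb Q_{\geq 0}^E$, there must exist an edge $e_0$ such that $t(e_0) = 0$ for every $t \in c_w$ --- if instead for each $e$ some $t_e \in c_w$ satisfied $t_e(e) > 0$, their sum would be a strictly positive element of $c_w$. Viewed in $\bb Q_{\geq 0}^{E \setminus \{e_0\}}$, such a $c_w$ coincides with the analogous cone associated to the restricted weighting $w|_{E \setminus \{e_0\}}$ on the contracted graph $\Gamma_{E \setminus \{e_0\}}$ (the vertex-balance correction at the contracted vertex is absorbed by the genus adjustment there, so the restriction is still a weighting). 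Since $\Gamma_{E \setminus \{e_0\}}$ has strictly fewer edges, the inductive hypothesis gives finitely many such cones for each $e_0 \in E$; taking the union over $e_0$ shows the set of non-full cones is finite.

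For full cones, \ref{rem:common_nonzero_thickness_implies_equal} shows that distinct weightings produce distinct full cones, so it suffices to prove that $W^* \coloneqq \{w \in W(\Gamma) : c_w \text{ contains a strictly positive thickness}\}$ is finite. If $w \in W^*$ is witnessed by $t > 0$, then for each directed cycle $\gamma$ the equation $\sum_{e \in \gamma} w_\gamma(e) t(e) = 0$ forces the values $w_\gamma(e)$ along $\gamma$ not to be uniformly positive. To show finiteness, suppose for contradiction that $w_n \in W^*$ with $|w_n| \to \infty$; writing $w_n = w_0 + z_n$ with $w_0$ fixed and $z_n \in \on H_1(\Gamma, \bb Z)$, pass to a subsequence so that $\hat z_n \coloneqq z_n/|z_n|$ converges to a unit vector $\hat z \in \on H_1(\Gamma, \bb R)$. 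A topological-sort argument shows that any nonzero closed $1$-chain on a graph admits a directed cycle on which it is strictly positive: if the positive subgraph were acyclic, the last vertex in a topological ordering would receive positive flow but emit none, contradicting $\partial \hat z = 0$. Applied to $\hat z$, this yields a directed cycle $\gamma$ with $\hat z_\gamma > 0$ on every edge of $\gamma$; then for $n$ large, $(w_n)_\gamma$ is uniformly positive on $\gamma$, contradicting the property derived above.

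The main technical hurdle is this last boundedness argument for $W^*$; by contrast, the reduction to a smaller graph for non-full cones is fairly routine.
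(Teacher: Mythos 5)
Your proof is correct, and while it shares the paper's central mechanism --- a positive cycle for $w$ forces every compatible thickness to vanish along that cycle, so only weightings admitting no positive cycle can contribute a non-degenerate cone --- it reaches the key finiteness statement by a genuinely different argument. The paper's \ref{lem:existence_of_positive_cycle} shows that all but finitely many weightings admit a positive cycle via an explicit quantitative construction: order a cycle basis, introduce the recursive bound $N = m\phi(h)$, locate a critical index $r$, and split $w + \on{cycle}(\mathbf v) = F + G$ with $F$ dominating $G$ on a well-chosen cycle. You replace all of this with a soft compactness argument: normalise a putative unbounded sequence of exceptional weightings, pass to a limit flow $\hat z$ on the unit sphere of $\on H_1(\Gamma,\bb R)$, and extract a positive cycle for $\hat z$ (hence for $w_n$ with $n$ large) by the same ``follow positive outgoing edges until the walk closes up'' device that the paper uses in Step 4 of its proof. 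Your version is shorter and arguably more transparent, at the cost of being non-effective, whereas the paper's bound $N$ is explicit; the different organisation of the induction ($|E|$ with single-edge contractions versus $\on{h}^1(\Gamma)$ with whole-cycle contractions, the latter mirroring \ref{lem:decomposition_of_weighting}) is essentially cosmetic. Two small points worth spelling out if you write this up: first, that an infinite subset of $W(\Gamma)$ is unbounded because $W(\Gamma)$ is a coset of the lattice $\on H_1(\Gamma,\bb Z)$ in a finite-dimensional space; second, in the non-full case, the identification $c_w \cap \{t(e_0)=0\} = c_{w|_{\Gamma/e_0}} \times \{\mathbf 0\}$ requires checking that cycles of $\Gamma/e_0$ lift to cycles of $\Gamma$ after possibly inserting $e_0$ (and that self-loops only contribute the vacuous condition $w(e_0)t(e_0)=0$) --- true, but it deserves a sentence.
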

\begin{proof}
The proof is by induction on $\on{h}^1(\Gamma)$. Recalling that $W(\Gamma)$ is a torsor under $\on{H}_1(\Gamma, \bb Z)$, we see that $W$ is finite whenever $\on{h}^1(\Gamma) = 0$, so in this case there is nothing to prove. In general, we say that a weighting $w$ on $\Gamma$ \emph{admits a positive cycle} if there exists a cycle $\gamma$ in $\Gamma$ such that $w(e) >0$ for every $e \in \gamma$. In the next two lemmas, we will show 
\begin{enumerate}
\item For fixed $\Gamma$, all but finitely many weightings admit a positive cycle (\ref{lem:existence_of_positive_cycle}). 
\item If $\gamma$ is a positive cycle for $w$ and $\Gamma/\gamma $ is the graph obtained from $\Gamma$ by contracting every edge in $\gamma$, then 
\begin{equation*}
c_w = c_{w|_{\Gamma/\gamma}} \times \{ \mathbf{0}\}, 
\end{equation*}
where $c_{w|_{\Gamma/\gamma}} \sub \bb Q_{\ge 0}^{E(\Gamma/\gamma)}$ is the cone associated to the restricted weighting $w|_{\Gamma/\gamma}$, and $\mathbf{0}$ is the zero vector in $\bb Q_{\ge 0}^{E(\gamma)}$ (\ref{lem:decomposition_of_weighting}). 
\end{enumerate}
Now there are only finitely many cycles in $\Gamma$, and for every cycle $\gamma$ we have that $\on{h}^1(\Gamma/\gamma) < \on{h}^1(\Gamma)$, hence by our induction hypothesis we have that there are only finitely many cones for $\Gamma/\gamma$. Putting these ingredients together concludes the proof. 
\end{proof}

\begin{lemma}\label{lem:existence_of_positive_cycle}
For fixed $\Gamma$, all but finitely many $w$ admit a positive cycle
\end{lemma}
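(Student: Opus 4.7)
The plan is to show that any $w \in W(\Gamma)$ without a positive cycle induces a directed acyclic graph (DAG) structure on $\Gamma$, and then to bound, using the vertex constraints, the weightings compatible with any fixed DAG structure. Since there are only finitely many candidate DAG structures on $\Gamma$, this will suffice.

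To the first point, I would define $D_w$ to be the directed graph on $V(\Gamma)$ obtained by orienting each edge $e$ of $\Gamma$ with $w(e) \neq 0$ in the direction along which $w$ is positive, and discarding edges with $w(e) = 0$. Self-loops with $w \neq 0$ are themselves length-one positive cycles, so for bad $w$ every self-loop must carry weight $0$. For non-loop edges, any directed cycle in $D_w$ contains a simple (vertex-non-repeating) directed cycle, which corresponds to a cycle of $\Gamma$ (in the paper's sense) along which $w$ is positive on every edge --- contradicting our assumption. Hence $D_w$ is a DAG.

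Next I would fix an \emph{orientation pattern} on $E(\Gamma)$ (an assignment of each edge to $\{+,-,0\}$) whose induced directed graph is a DAG, and choose a topological ordering $v_1, \ldots, v_n$ of the vertices. By induction on $i$, I would show that the weight on every half-edge at $v_i$ is bounded in absolute value by a constant depending only on the leg-weighting and $k$. At $v_1$, every non-loop, non-leg half-edge is outgoing, so the vertex constraint
\begin{equation*}
\sum_{\on{end}(h) = v_1} w(h) + k\kappa(v_1) = 0
\end{equation*}
together with the fixed leg weights bounds the sum of the non-negative outgoing weights by the constant $L(v_1) \defeq -k\kappa(v_1) - \sum_{\text{legs at }v_1} w(\text{leg})$, and hence bounds each one individually. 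At $v_i$, the incoming weights are bounded by the inductive hypothesis, and the analogous vertex constraint expresses the sum of the (non-negative) outgoing weights in terms of $L(v_i)$ and those incoming weights, bounding each outgoing weight.

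Since the number of orientation patterns is at most $3^{|E(\Gamma)|}$, summing over these finitely many DAG structures yields finitely many bad weightings in $W(\Gamma)$. The only points requiring genuine care will be the sign conventions for incoming versus outgoing half-edges in the vertex constraint, and the trivial case where some $L(v_i)$ forces the relevant set of compatible weightings to be empty; the core reduction --- that \textquotedblleft no positive cycle\textquotedblright{} is the same as \textquotedblleft $D_w$ is a DAG\textquotedblright{} --- is elementary once the definitions are unpacked.
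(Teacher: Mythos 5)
Your proof is correct, and it takes a genuinely different route from the paper's. The paper fixes a basis $B$ of $\on{H}_1(\Gamma,\bb Z)$ consisting of cycles, writes every weighting as $w+\on{cycle}(\mathbf v)$ with $\mathbf v\in\bb Z^B$, and shows by an explicit recursion (the function $\phi$ and the ``critical index'' $r$ of Step 3) that once some coordinate $\abs{\mathbf v_b}$ exceeds the threshold $N=m\phi(\on{h}^1(\Gamma))$, a positive cycle can be assembled out of the basis cycles with large coefficients. You instead characterise the bad weightings as exactly those whose positive-flow orientation is acyclic (self-loops being forced to carry weight $0$), and then bound every half-edge weight inductively along a topological order of the resulting DAG using only the vertex constraints and the fixed leg-weights; finiteness follows since there are at most $3^{\abs{E(\Gamma)}}$ orientation patterns. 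The reductions you need --- that a directed closed walk contains a simple directed cycle, and that such a cycle (including the two-parallel-edge case and the length-one self-loop case) is a cycle in the paper's sense --- are all unproblematic, and the key point that makes the induction close, namely that all outgoing weights at a vertex are \emph{positive} so that a bounded sum bounds each term, is built into your definition of the orientation. Your argument is more elementary, avoiding the choice of a homology basis and the delicate Steps 3--5 of the paper, at the cost of organising the bound by orientation pattern rather than producing an explicit ball $B(N)$ in cycle coordinates; since \ref{lem:finitely_many_cones} only uses the qualitative finiteness statement, nothing downstream is lost.
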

\begin{proof}
\noindent \textbf{Step 1:} setup. 

Fix a weighting $w$, and fix a basis $B$ of $\on{H}_1(\Gamma, \bb Z)$ consisting of cycles. We can think of $b \in B$ as a function from the set $\stackrel{\to}{E}$ of directed edges of $\Gamma$, sending a directed edge $e$ to 0 is $e \notin b$, and $\pm1$ otherwise (depending on whether the orientation of $e$ agrees with $b$). Given an element $\mathbf v \in \bb Z^B$, we define $\on{cycle}(v)$ to be the function $\stackrel{\to}{E} \to \bb Z$ given by $\sum_b \mathbf{v}_b b$. This is somewhat clumsy notation, as it would be nicer just to think of $\mathbf v$ as an element in the cycle space, but distinguishing carefully between $\mathbf v$ and $\on{cycle}(\mathbf v)$ seems important to avoid confusion in this proof. In this way we see that every weighting on $\Gamma$ is of the form $w + \on{cycle}(\mathbf{v})$ for a unique $\mathbf{v} \in \bb Z^B$. 

Define recursively a function $\phi\colon \bb Z_{\ge 0} \to \bb Z$ by setting $\phi(0) = 1$ and $\phi(n) = \sum_{0 \le j < n} \phi(j)$. Write $m \coloneqq \max_{e \in E(\Gamma)}\abs{w(e)}$, and $h \coloneqq \on{h}^1(\Gamma)$. Define $N = m\phi(h)$ (this is rigged exactly to make step 3 of this argument work). Define
\begin{equation*}
B(N) = \left\{\mathbf{v} \in \bb Z^B : \max_{b \in B} \abs{\mathbf{v}_b} \le N\right\}, 
\end{equation*}
a finite set. In the remainder of this argument, we will show that for every $\mathbf v \in \bb Z^B \setminus B(N)$, the weighting $w + \on{cycle}(\mathbf v)$ admits a positive cycle. To this end, we fix for the remainder of the argument a $\mathbf v \in \bb Z^B \setminus B(N)$. 

\noindent \textbf{Step 2:} ordering the $\mathbf v_{b}$. 

Up to changing the orientations of the elements of $B$, we may assume that all the integers $\mathbf{v}_b$ are non-negative. Put an ordering on $B$ so that the $\mathbf v_{b_i}$ are in increasing order: 
\begin{equation*}
0 \le \mathbf v_{b_1} \le \mathbf v_{b_2} \le \cdots \le \mathbf v_{b_h}, 
\end{equation*}
with $\mathbf v_{b_h} > N$. 

\noindent \textbf{Step 3:} choosing a critical $b_{r}$. 

We now show by a small computation that there exists $1 \le r \le h$ such that 
\begin{equation*}
m + \sum_{1 \le i \le  r-1} \mathbf v_{b_i} < \mathbf v_{b_r}. 
\end{equation*}
Indeed, suppose no such $r$ exists. Then for each $1 \le j \le h$ we have 
\begin{equation*}
m + \sum_{1 \le i \le  j-1} \mathbf v_{b_i} \ge \mathbf v_{b_j}, 
\end{equation*}
and induction on $j$ yields $\mathbf v_{b_j} \le m \phi(j)$ for all $1 \le j \le h$, contradicting our assumption that $\mathbf v_{b_h} > N = m\phi(h)$. 

From now on, we fix such an $r$. 

\noindent \textbf{Step 4:} finding the positive cycle $\gamma$. 


Define a function $f\colon \stackrel{\to}{E} \to \bb Z$ sending a directed edge $e$ to 
\begin{equation*}
\sum_{r \le j \le h} \left\{ \begin{array}{cl}
1 & \text{if }e \in b_j\text{ and has same direction as }b_j\\
-1& \text{if }e \in b_j\text{ and has opposite direction to }b_j\\ 
0& \text{ otherwise (i.e. }e \notin b_j\text{)}\\
\end{array} \right\}
\end{equation*}
In the remainder of this step we will show that there exists a cycle $\gamma$ with $f(e) > 0$ for every directed edge $e \in \gamma$. In step 5 we will see that any such $\gamma$ is necessarily a positive cycle. 

First, because the $b_j$ are part of a basis, we see that $f$ is not identical to zero. Hence there is a directed edge $e$ with $f(e) > 0$. We build a path in $\Gamma$ starting with $e$ by the following procedure: whenever we hit a vertex $v$, choose an edge $e_v$ out of $v$ such that $f(e_v) > 0$. Why is this always possible? Note that the sum over all edges $e''$ into $v$ of $f(e'')$ is necessarily zero, and since we arrived at $v$ along an edge with $f > 0$, there must also be an edge leaving $v$ with $f>0$. 

Since $\Gamma$ is finite, this path must eventually meet itself, say at a vertex $v_0$. Deleting the start of the path up to $v_0$ yields the cycle $\gamma$ that we sought. 

\noindent \textbf{Step 5:} showing that $\gamma$ is indeed a positive cycle for the weighting $w + \on{cycle}(\mathbf v)$. 

Choose any $\gamma$ as in step 4. Define functions $F$, $G\colon \stackrel{\to}{E}\to\bb Z$ by the following formulae: 
\begin{equation*}
F(e) = \sum_{r \le j \le h} \left\{ \begin{array}{cl}
\mathbf v_{b_j} & \text{if }e \in b_j\text{ and has same direction as }b_j\\
-\mathbf v_{b_j}& \text{if }e \in b_j\text{ and has opposite direction to }b_j\\ 
0& \text{ otherwise (i.e. }e \notin b_j\text{)}\\
\end{array} \right\}
\end{equation*}
\begin{equation*}
G(e) = w(e) + \sum_{1 \le  j \le r-1} \left\{ \begin{array}{cl}
\mathbf v_{b_j} & \text{if }e \in b_j\text{ and has same direction as }b_j\\
-\mathbf v_{b_j}& \text{if }e \in b_j\text{ and has opposite direction to }b_j\\ 
0& \text{ otherwise (i.e. }e \notin b_j\text{)}\\
\end{array} \right\}. 
\end{equation*}
Observe that $F + G = w + \on{cycle}(\mathbf v)$ as functions $\stackrel{\to}{E} \to \bb Z$. Let $e \in \gamma$ be a directed edge; we will show that $F(e) + G(e) >0$ as required. 

Define $\epsilon = m + \sum_{1 \le i \le r-1} \mathbf v_{b_i}$ (cf. step 3). Note that $f(e) \ge 1$, and $F(e) > \epsilon f(e) \ge \epsilon$ since $\mathbf v_{b_j} > \epsilon$ for every $j \ge r$. Now 
\begin{equation*}
\abs{G(e)} \le m + \sum_{1 \le j \le r-1}\mathbf  v_{b_j} = \epsilon, 
\end{equation*}
hence
\begin{equation*}
(w + \on{cycle}(\mathbf v)) (e)  = F(e) + G(e) > \epsilon - \epsilon = 0. \qedhere
\end{equation*}
\end{proof}

\begin{lemma}\label{lem:decomposition_of_weighting}
Fix a weighting $w$. If $\gamma$ is a positive cycle for $w$ and $\Gamma/\gamma $ is the graph obtained from $\Gamma$ by contracting every edge in $\gamma$, then 
\begin{equation*}
c_w = c_{w|_{\Gamma/\gamma}} \times \{ \mathbf{0}\}, 
\end{equation*}
where $c_{w|_{\Gamma/\gamma}} \sub \bb Q_{\ge 0}^{E(\Gamma/\gamma)}$ is the cone associated to the restricted weighting $w|_{\Gamma/\gamma}$, and $\mathbf{0}$ is the zero vector in $\bb Q_{\ge 0}^{E(\gamma)}$.
\end{lemma}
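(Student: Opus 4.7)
The plan is to prove the two inclusions $c_w \subseteq c_{w|_{\Gamma/\gamma}}\times\{\mathbf{0}\}$ and $c_{w|_{\Gamma/\gamma}}\times\{\mathbf{0}\} \subseteq c_w$ separately. Throughout, we use the obvious identification of $\bb Q_{\ge 0}^{E(\Gamma)}$ with $\bb Q_{\ge 0}^{E(\Gamma/\gamma)}\times \bb Q_{\ge 0}^{E(\gamma)}$.

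For $(\subseteq)$, the first step is to show that any $t \in c_w$ vanishes on every edge of $\gamma$. Apply the compatibility condition \eqref{eq:compat} to the cycle $\gamma$ itself to obtain
\begin{equation*}
\sum_{e \in \gamma} w_\gamma(e)\, t(e) = 0.
\end{equation*}
Since $\gamma$ is a positive cycle for $w$ we have $w_\gamma(e) > 0$ for all $e \in \gamma$, and $t(e) \ge 0$ by definition of thickness; a sum of non-negative terms that vanishes must be termwise zero, so $t(e) = 0$ for every $e \in E(\gamma)$. It then remains to check that the restriction $\bar t \coloneqq t|_{E(\Gamma/\gamma)}$ lies in $c_{w|_{\Gamma/\gamma}}$: given any cycle $\gamma'$ in $\Gamma/\gamma$, lift it to a closed walk $\tilde\gamma'$ in $\Gamma$ by inserting, whenever the endpoints of consecutive lifted edges fail to match in $\Gamma$, a path through $\gamma$ joining them (possible since $\gamma$ is connected). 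The inserted segments contribute nothing to $\sum w_{\tilde\gamma'}(e) t(e)$ because $t$ vanishes on $\gamma$, so $\sum_{e\in\gamma'} (w|_{\Gamma/\gamma})_{\gamma'}(e)\,\bar t(e) = \sum_{e \in \tilde\gamma'} w_{\tilde\gamma'}(e)\, t(e)$. The right-hand sum vanishes because the homology class of any closed walk in $\Gamma$ decomposes as a $\bb Z$-linear combination of classes of simple cycles (pick a spanning tree and take the basis of non-tree edges), on each of which compatibility holds by hypothesis; the sum $\sum w(e) t(e)$ is additive in this decomposition.

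The reverse inclusion $(\supseteq)$ is symmetric: let $t = (\bar t, \mathbf{0})$ with $\bar t \in c_{w|_{\Gamma/\gamma}}$. Non-negativity is immediate. For any simple cycle $\gamma''$ in $\Gamma$, discard the edges of $\gamma''$ lying in $\gamma$ (where $t$ vanishes) and observe that the remaining directed edges form the image $\bar{\gamma}''$ of $\gamma''$ in $\Gamma/\gamma$, which is a closed walk. Orientations and values of $w$ are preserved under contraction of edges of $\gamma$ (they are simply deleted from the data), so $\sum_{e\in\gamma''} w_{\gamma''}(e)\,t(e) = \sum_{e \in \bar{\gamma}''} (w|_{\Gamma/\gamma})_{\bar{\gamma}''}(e)\,\bar t(e)$, and the latter vanishes by the same $H_1$-decomposition argument applied to $\Gamma/\gamma$.

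The only genuinely non-routine point is the passage from cycles to closed walks under graph contraction — under $\pi\colon \Gamma \to \Gamma/\gamma$, a cycle in one graph generally lifts/projects only to a closed walk in the other, not to a cycle. The remedy is the standard observation that compatibility of $t$ with $w$ extends automatically from simple cycles to arbitrary closed walks via the $H_1$-basis given by non-tree edges of a chosen spanning tree, which is the main technical input; everything else is bookkeeping about orientations.
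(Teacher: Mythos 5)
Your proof is correct and follows essentially the same route as the paper's: the key step in both is to apply the compatibility condition \eqref{eq:compat} to the positive cycle $\gamma$ itself, so that positivity of $w$ on $\gamma$ and non-negativity of $t$ force $t$ to vanish on every edge of $\gamma$. The remaining bookkeeping about passing between cycles of $\Gamma$ and closed walks of $\Gamma/\gamma$ (via the spanning-tree basis of $H_1$) is exactly the point the paper dismisses as following ``easily from the definition,'' and you have filled it in correctly.
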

\begin{proof}
The inclusion 
\begin{equation*}
c_w \supseteq c_{w|_{\Gamma/\gamma}} \times \{ \mathbf{0}\}
\end{equation*}
follows easily from the definition of the cone of a weighting, since every cycle in $\Gamma/\gamma$ arises by restricting some cycle in $\Gamma$. 

 We need to show the other inclusion, so let $t \in c_w$ be a thickness. The $t$ satisfies the equation
\begin{equation*}
\sum_{e \in \gamma} t(e) w(e) = 0, 
\end{equation*}
and since all the $w(e)$ are positive this forces all the $t(e)$ to vanish. 
\end{proof}

Putting together lemmas \oref{lem:cones_intersect_in_face}, \oref{lem:intersect_in_faces_gives_fan}, and \oref{lem:finitely_many_cones} we immediately deduce:
\begin{corollary}\label{cor:form_fan}
The set of cones $F_\Gamma$ is a finite fan inside $\bb Q_{\ge 0}^{E}$. 
\end{corollary}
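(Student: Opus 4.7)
The plan is to observe that the three preceding lemmas assemble into the corollary in a straightforward way, so the proof is almost entirely bookkeeping. I would state at the outset that $F_\Gamma$, by \oref{def:fans}, is the collection of faces of the cones $c_w$ as $w$ ranges over $W(\Gamma)$, and recall that each $c_w$ is a rational polyhedral cone in $\bb Q_{\ge 0}^E$.

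First I would invoke \oref{lem:cones_intersect_in_face} to conclude that for any two weightings $w_1, w_2 \in W(\Gamma)$ the intersection $c_{w_1} \cap c_{w_2}$ is a face of $c_{w_1}$ (and, by symmetry, of $c_{w_2}$). Together with the elementary fact that the intersection of two faces of a single cone is again a face of each, this verifies the hypothesis of \oref{lem:intersect_in_faces_gives_fan} for the collection $\Phi_0 = \{ c_w : w \in W(\Gamma)\}$. Applying that lemma, the set of all faces of the $c_w$ is a fan in $\bb Q^E$; since each $c_w$ lies in $\bb Q_{\ge 0}^E$, so do all its faces, and the resulting fan sits inside the positive orthant.

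Finally, to conclude finiteness, I would invoke \oref{lem:finitely_many_cones}, which tells us that the collection $\{c_w : w \in W(\Gamma)\}$ is finite as a \emph{set} of cones (even though $W(\Gamma)$ itself may be infinite when $\on{h}^1(\Gamma) > 0$). A polyhedral cone has only finitely many faces, so the fan $F_\Gamma$ is the union of finitely many finite sets of faces, hence finite.

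There is no real obstacle in this argument — the conceptual work has been done in the three lemmas, of which the main difficulty was \oref{lem:finitely_many_cones} (controlled via the positive-cycle induction). The only minor point worth flagging is to make sure the notion of ``fan'' being used in \oref{lem:intersect_in_faces_gives_fan} matches the one needed for subsequent toric constructions, i.e.\ that faces of cones in $F_\Gamma$ are again in $F_\Gamma$ (which is immediate from the definition) and that pairwise intersections are faces of each (which is exactly the content of the previous step).
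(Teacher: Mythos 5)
Your argument is correct and matches the paper exactly: the paper derives \ref{cor:form_fan} by simply combining \ref{lem:cones_intersect_in_face}, \ref{lem:intersect_in_faces_gives_fan}, and \ref{lem:finitely_many_cones}, with finiteness following because a finite set of polyhedral cones has only finitely many faces. Nothing further is needed.
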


\subsection*{The construction of $\Md$}

\begin{definition}\label{rem:patches}
We define $\Md_\Gamma$ to be the toric scheme over $\Mbar_\Gamma$ defined by the fan $F_\Gamma$. 
We equip it with the toric log structure. 

If $w\in W(\Gamma)$ we define $\Md_w$ to be the affine toric variety associated to $c_w$, an affine patch of $\Md_\Gamma$.  
\end{definition}

\begin{remark}
It follows from \cite{Fulton1993Introduction-to} that $\Md_\Gamma \to \Mbar_\Gamma$ is separated, of finite presentation, and normal. It is moreover logarithmically \'etale, since it is given by toric blowups. 
\end{remark}


Given a combinatorial chart $\Mbar \leftarrow U \to \Mbar_\Gamma$ we define $\Md_U$ by pulling back $\Md_\Gamma$ from $\Mbar_\Gamma$. Such $U$ form an \'etale cover of $\Mbar$, and the collection of $\Md_U$ is easily upgraded to a descent datum. 
\begin{definition}
We define $\pi^\loz\colon \Md \to \Mbar$ to be the algebraic space obtained by descending the $\Md_U$. 
\end{definition}

\expanded{Now we have constructed the $\Md_\Gamma$ and $\MD_\Gamma$, and we want to glue them to construct $\Md$ and $\MD$. We know $\Mbar$ is covered by combinatorial charts $U$, and over each $U$ we build $\Md_U$ (resp. $\MD_U$) by pulling back the relevant $\Md_\Gamma$ (resp. $\MD_\Gamma$). We need to upgrade this collection of $\Md_U$ (resp. $\MD_U$) to a descent datum, so that it will descend to give us a $\Md$ (resp. $\MD$). 

We will not give all the details of the construction of the descent datum, since it is rather similar to those considered in \cite{Holmes2014A-Neron-model-o} and \cite{Biesel2016Fine-compactifi}. However, the key step is to give a pullback map on the $\Md_U$. For the remainder of this paragraph we will discuss only $\Md$; the story for $\MD$ is exactly parallel. Suppose $U$ and $U'$ are combinatorial charts with graphs $\Gamma$, $\Gamma'$, and suppose that we have a map $U' \to U$ over $\Mbar$. We need to specify an isomorphism $\Md_{U'} \iso \Md_U \times_U U'$. Now $\Gamma'$ is naturally obtained from $\Gamma$ by contracting some edges, and a weighting on $\Gamma$ clearly induces a weighting on $\Gamma'$, giving a map $\on{res}\colon W(\Gamma) \to W(\Gamma')$ (which is a bijection if and only if the contraction does not kill any homology). If $w \in W(\Gamma)$ then one sees from the definition a natural isomorphism $\Md_w \times_{\Mbar_\Gamma} \Mbar_{\Gamma'} \iso \Md_{\on{res}(w)}$, and these glue to give the isomorphism $\Md_{U'} \iso \Md_U \times_U U'$ that we need. }

\begin{theorem}\label{thm:properties_of_Md_MD}
The stack $\Md$ is normal, and the map $\pi^\loz\colon \Md \to \Mbar$ is separated, of finite presentation, relatively representable by algebraic spaces, birational, and logarithmically \'etale. 
\end{theorem}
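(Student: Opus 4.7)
The plan is to verify all four properties étale-locally on $\Mbar$ via the combinatorial charts, reducing everything to known facts about the toric morphism $\Md_\Gamma \to \Mbar_\Gamma$ (as listed in the remark just above the theorem). For each combinatorial chart $f\colon U \to \Mbar$, $g\colon U \to \Mbar_\Gamma$, the construction gives $\Md_U = \Md_\Gamma \times_{\Mbar_\Gamma} U$, so the projection $\Md_U \to U$ is the base change of $\Md_\Gamma \to \Mbar_\Gamma$. Separatedness, finite presentation, and relative representability are each preserved by base change and descend along étale covers; since by the remark $\Md_\Gamma \to \Mbar_\Gamma$ is a separated, finitely presented morphism of schemes, the same follows for $\pi^\loz \colon \Md \to \Mbar$ after descent. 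In fact this produces relative representability by schemes, which is stronger than the stated representability by algebraic spaces.

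For normality, the essential input is that by condition (2) in the definition of a combinatorial chart, $g$ is strict and log smooth, hence smooth in the classical sense — strict log smoothness coincides with classical smoothness of the underlying morphism. The base change $\Md_U \to \Md_\Gamma$ is therefore smooth, and since $\Md_\Gamma$ is normal, so is $\Md_U$. Normality is étale-local on the source, so $\Md$ itself is normal.

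For birationality, I will identify a dense open of $\Mbar$ over which $\pi^\loz$ is an isomorphism, namely the smooth locus $\ca M$. Because $F_\Gamma$ is a fan sitting inside the cone $\bb Q_{\ge 0}^E$ that defines $\Mbar_\Gamma = \on{Spec}\Lambda[\bb N^E]$, standard toric geometry shows that $\Md_\Gamma \to \Mbar_\Gamma$ restricts to an isomorphism over the open torus $T_\Gamma = \on{Spec}\Lambda[\bb Z^E] \subset \Mbar_\Gamma$. Condition (5) of a combinatorial chart translates the condition that a geometric point of $U$ maps into $\ca M$ (i.e.\ the curve is smooth, equivalently its associated graph has no edges) into the condition that its image in $\Mbar_\Gamma$ lies in $T_\Gamma$. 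Hence $\Md_U \to U$ is an isomorphism over $U \times_\Mbar \ca M$, and gluing yields an isomorphism $\Md \times_\Mbar \ca M \iso \ca M$. Since $\ca M$ is dense in $\Mbar$ (every stable curve is a specialisation of smooth ones), $\pi^\loz$ is birational.

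The only step requiring any real thought is the normality argument, where one must use the log-smoothness hypothesis to extract classical smoothness of $g$; everything else is either pure descent or pure toric geometry. Once that identification is made, I expect no genuine obstacles.
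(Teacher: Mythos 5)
Your proof is correct and follows essentially the same route as the paper's: reduce all the properties to the toric morphisms $\Md_\Gamma \to \Mbar_\Gamma$ via the combinatorial charts (which are \'etale over $\Mbar$ and strict log smooth, hence smooth, over $\Mbar_\Gamma$) and invoke standard toric geometry, with birationality coming from the isomorphism over $\ca M$. The one quibble is your parenthetical claim of representability by \emph{schemes}: \'etale descent of schemes need not be effective, which is exactly why the paper defines $\Md$ as the descended algebraic space and asserts only representability by algebraic spaces.
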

Note that $\pi^\lozenge$ is almost never proper. 
\begin{proof}
Toric varieties in this sense are always normal, see \cite{Fulton1993Introduction-to}. The properties of $\pi^\loz$ are all local on the target, so it is enough to check them for the $\Md_\Gamma \to \Mbar_\Gamma$. Separatedness and logarithmic-\'etaleness are automatic for toric varieties (in the sense of Fulton), finite presentation follows from the finiteness of the fans, see \ref{cor:form_fan}. The maps are clearly isomorphisms over the locus $\ca M$ of smooth curves, hence are birational. 
\end{proof}

\section{Universal property of $\Md$}\label{sec:universal_property_md}
\begin{definition}\label{def:desingularisable}
We say a stack $T$ is \emph{locally desingularisable} if \'etale-locally it admits a proper surjective finitely presented map $T' \to T$ with $T'$ regular and $T' \to T$ inducing an isomorphism between some dense open substacks of $T'$ and $T$. 
\end{definition}
For example, this is true in characteristic zero, and for arithmetic surfaces and threefolds, and for log regular schemes. We can now give a more precise variant of the notion of a \emph{$\sigma$-extending} morphism from the introduction. Recall that $\sigma\colon \ca M \to \ca J$ is the morphism given by $\omega^{\otimes k}(-\sum_i a_i x_i)$. 

Given any map $t\colon T \to \Mbar$, we can consider the open subset $t^{-1}\ca M = T \times_{\Mbar}\ca M \hra T$, and the semiabelian scheme $t^*\ca J = T \times_{\Mbar}\ca J$ over $T$. We denote by $t^*\sigma$ the canonical map $id_T \times \sigma\colon t^{-1}\ca M \to t^* \ca J$.

\begin{definition}\label{def:sigma_extending}
We say a map $t\colon T \to \Mbar$ is \emph{non-degenerate} if $T$ is normal and locally desingularisable, and $t^{-1}\ca M$ is dense in $T$. We say $t$ is \emph{$\sigma$-extending} if in addition the section $t^*\sigma \in (t^*\ca J)(t^{-1}{\ca M})$ admits a (necessarily unique) extension to $t^*\ca J(T)$. 
\end{definition}

For example, the open immersion $\ca M \to \Mbar$ is clearly $\sigma$-extending, and in general the identity on $\Mbar$ is not $\sigma$-extending. In this section we will show that $\Md \to \Mbar$ is $\sigma$-extending, and moreover that $\Md$ is universal with respect to this property: that it is terminal in the 2-category of $\sigma$-extending morphisms (\ref{cor:universal_property_md}). Our main technical result is the following: 
\begin{lemma}\label{lem:w_aligned_implies_extending}
Fix a combinatorial chart $\Mbar \leftarrow U \to \Mbar_\Gamma$, and let $t\colon T \to U$ be such that the composite $T \to \Mbar$ is non-degenerate. The following are equivalent: 
\begin{enumerate}
\item 
Locally on $T$ there exists a weighting $w$ on $\Gamma$ such that $T \to \Mbar_\Gamma$ factors via $\Md_w \to \Mbar_\Gamma$. 
\item 
$T \to \Mbar$ is $\sigma$-extending. 
\end{enumerate}
\end{lemma}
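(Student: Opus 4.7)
The plan is to translate both conditions into statements about the edge-thicknesses $\ell(e) \in \bar P_T$ attached to $T$, and then match them by means of the classical alignment criterion for extensions of sections of the jacobian N\'eron model across nodal boundaries. First, I would interpret (1) in log-toric terms: since $\Md_w \to \Mbar_\Gamma$ is the affine toric patch cut out by the cone $c_w \subset \bb Q_{\ge 0}^E$, a map $T \to \Mbar_\Gamma$ lifts through it if and only if, at every geometric point of $T$, the images in $\bar P_T^{\on{gp}}$ of the edge thicknesses $\ell(e)$ satisfy every cycle relation $\sum_{e\in\gamma} w_\gamma(e)\,\ell(e) = 0$ defining $c_w$. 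Thus (1) is a concrete system of relations on thicknesses indexed by a weighting $w$.

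For $(1)\Rightarrow(2)$ I would construct the extension of $\sigma$ from $w$ by hand. The vertex-balance condition $\sum_h w(h) + k\kappa(v) = 0$ says exactly that the sum of half-edge weights at $v$ equals minus the multidegree of $\omega^{\otimes k}(-\Sigma)$ on $C_v$. Using the $w(h)$ as multiplicities, I would twist $\omega^{\otimes k}(-\Sigma)$ by a combination of fibral components of $\ca C_T$; the cycle equations from the previous step are precisely what guarantees that the a priori only rational twist is in fact a genuine line bundle $L_w$ on $\ca C_T$. The resulting $L_w$ then has multidegree zero on every component of every fibre and agrees with $\omega^{\otimes k}(-\Sigma)$ generically, so it defines a morphism $T \to \ca J$ extending $\sigma$.

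For $(2)\Rightarrow(1)$ I would test trait by trait. Using non-degeneracy and local desingularisability, it suffices to verify the conclusion after pullback along maps from strictly Henselian regular traits whose generic points land in $\ca M$. On each such trait the extension problem for $\sigma$ is the classical N\'eron alignment problem for the jacobian of the nodal special fibre, whose solution (as developed in Holmes' earlier work on N\'eron models of jacobians of nodal curves) yields a weighting $w$ such that the integer thicknesses of the trait lie in $c_w$. I would then globalise: picking a geometric point $p$ of $T$ with specialisation graph $\Gamma_p$ a contraction of $\Gamma$, the weightings read off from different traits through $p$ all extend the same weighting on the edges of $\Gamma$ not contracted at $p$, this weighting can be extended to all of $\Gamma$, and the cycle relations persist on a whole \'etale neighbourhood by constancy of the specialisation graph.

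The main technical obstacle is the globalisation step in $(2)\Rightarrow(1)$: one must produce a single weighting $w$ on $\Gamma$ that works across an \'etale neighbourhood of the chosen point in $T$, compatibly with all traits hitting each boundary stratum. This is where the connectedness of a combinatorial chart, together with the existence of sufficiently many traits through each stratum (ensured by non-degeneracy and local desingularisability), plays the crucial role.
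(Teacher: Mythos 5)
Your direction $(1)\Rightarrow(2)$ is essentially the paper's argument: from the weighting $w$ one builds a vertical (a priori Weil, in fact Cartier thanks to the cycle relations) twist $Y$ of $\omega^{\otimes k}(-\Sigma)$ whose multidegree on each component is corrected by the vertex-balance condition, and this line bundle gives the extension. No complaints there.

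The direction $(2)\Rightarrow(1)$ has a genuine gap at the step you yourself flag as the ``main technical obstacle''. Reducing to traits and then asserting that ``the weightings read off from different traits through $p$ all extend the same weighting'' is not something you can get from connectedness or from an abundance of traits: in general, distinct traits through a deep boundary point read off genuinely \emph{different} weightings. For the $2$-gon with leg weights $\pm n$, a trait with edge thicknesses $(1,1)$ forces the weighting $(n/2,n/2)$ while one with thicknesses $(2,1)$ forces $(n/3,2n/3)$; both traits individually admit extensions of $\sigma$ (whenever these are integral), yet $\sigma$ does not extend over a $2$-dimensional normal local $T$ whose thicknesses specialise to both. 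So the agreement of the trait-wise weightings is not a formal globalisation step --- it is essentially equivalent to the statement being proved, and it can only come from using the extension of $\sigma$ over all of $T$ at once, not trait by trait. The paper does exactly this: in the regular local case it applies the structure theorem for vertical Cartier divisors on nodal curves over regular bases (\cite[theorem 4.1]{Holmes2014Neron-models-an}), which reads off a \emph{single} integer $w(e)$ per edge from the Cartier condition for $Y$ at the node over the closed point (using factoriality of the regular local ring), and the cycle relations then follow by telescoping the local equations $r_v$. The normal case is then reduced to the regular one by passing to a desingularisation $T'\to T$, whose fibre over the closed point is connected by Zariski's Main Theorem --- this connectedness is what forces the locally-defined weightings on $T'$ to agree --- and finally descending the resulting identity of Cartier divisors from $T'$ back to $T$. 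To repair your proof you would need to replace the trait-by-trait analysis with an argument of this kind that exploits the extension over the whole local ring at the closed point.
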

Before giving the proof we set up a little notation, which will also be useful in \ref{sec:properness_of_DRL}. The map $T \to \Mbar$ defines a stable curve over $T$, which we shall denote by $C$. For each edge $e \in E = E(\Gamma)$ we define $\ell(e) \in \ca O_{\Mbar_\Gamma}(\Mbar_\Gamma)$ to be the image of the standard basis vector $\delta_e \in \bb N^E$ under the log structure map. If $w\in W(\Gamma)$ is a weighting and $\gamma \sub \Gamma$ a cycle, we let
\begin{equation*}
\delta_\gamma = \prod_{e \in \gamma} \delta_e^{w_\gamma(e)} \in \bb N^E
\end{equation*}
where $w_\gamma(e) \in \bb Z$ is the value of $w$ on $e$ in the direction dictated by $\gamma$. 
One easily verifies
\begin{lemma}\label{lem:cone_span_lines}
The dual cone $c_w^\vee \sub \bb Z^E$ is the span of the positive orthant in $\bb Z^E$ together with the $\delta_\gamma$ for $\gamma$ running over cycles in $\Gamma$. 
\end{lemma}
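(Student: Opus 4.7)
The plan is to recognize $c_w$ as the intersection of the positive orthant with a linear subspace, and then apply the standard convex duality $(\sigma \cap M)^{\vee} = \sigma^{\vee} + M^{\perp}$ for a rational polyhedral cone $\sigma$ and a linear subspace $M$.

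First, I would unpack the defining formula for $\delta_\gamma$ additively: viewing $\delta_\gamma$ as an element of $\bb Z^E$ (the associated group of $\bb N^E$), it has $e$-th coordinate $w_\gamma(e)$ when $e \in \gamma$ and $0$ otherwise. The cycle-compatibility condition \eqref{eq:compat} then reads precisely $\langle \delta_\gamma, t\rangle = 0$, so
\begin{equation*}
c_w = \bb Q_{\geq 0}^E \cap L, \qquad L := \bigcap_{\gamma} \delta_\gamma^{\perp},
\end{equation*}
where $\gamma$ runs over cycles in $\Gamma$.

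Applying the duality above with $\sigma = \bb Q_{\geq 0}^E$ (which is self-dual) immediately yields $c_w^{\vee} = \bb Q_{\geq 0}^E + L^{\perp}$, and by construction $L^{\perp}$ is the $\bb Q$-linear span of the vectors $\delta_\gamma$. Finally, reversing the orientation of a cycle $\gamma$ produces a cycle $\bar\gamma$ with $\delta_{\bar\gamma} = -\delta_\gamma$, so the $\bb Q$-linear span of the $\delta_\gamma$ coincides with their positive hull; intersecting with $\bb Z^E$ then gives the statement.

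The only thing requiring some care is the duality $(\sigma \cap M)^{\vee} = \sigma^{\vee} + M^{\perp}$ itself. For rational polyhedral cones this is classical: one has the general identity $(A \cap B)^{\vee} = \overline{A^{\vee} + B^{\vee}}$, and polyhedrality of $A$ and $B$ allows one to drop the closure. Alternatively it can be read off directly from Farkas' lemma. I do not anticipate any real obstacle.
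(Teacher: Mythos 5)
Your argument is correct; the paper offers no proof of this lemma (it is prefaced only by ``one easily verifies''), and your reduction to the polyhedral duality $(\sigma \cap M)^\vee = \sigma^\vee + M^\perp$ with $\sigma = \bb Q_{\ge 0}^E$ self-dual and $M = \bigcap_\gamma \delta_\gamma^\perp$, together with the observation that reversing the orientation of a cycle negates $\delta_\gamma$, is exactly the intended verification. The one point worth being explicit about is that ``span'' in the statement must be read at the level of rational cones (equivalently, saturated monoids) rather than as monoid generation of $c_w^\vee \cap \bb Z^E$; that is what your argument delivers, and it is all the paper uses (any saturation discrepancy being absorbed later by normality of the schemes involved).
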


\begin{proof}[Proof of \ref{lem:w_aligned_implies_extending}]
\leavevmode

$(1) \implies (2)$: We may assume $T$ is local, since an extension is unique if it exists. Perhaps shrinking the combinatorial chart we may assume that $\Gamma$ is the graph over the closed point of $T$. 

The map $T \to \Mbar_\Gamma$ corresponds to a map $t^\#\colon \Lambda[\bb N^E] \to \ca O_T(T)$, and each $t^\#\delta_e$ is a unit on $t^{-1}\ca M$. For each cycle $\gamma$ we obtain an element $t^\#\delta_\gamma \in \on{Frac}\ca O_T(T)$, and the factorisation of $t$ via $\Md_w$ says that $t^\#\delta_\gamma \in \ca O_T(T) \sub \on{Frac}\ca O_T(T)$. If we write $i(\gamma)$ for the cycle with the same edges as $\gamma$ but in the reverse direction, we see that $t^\#\delta_{i(\gamma)} = t^\#\delta_\gamma^{-1}$ in $\on{Frac}\ca O_T(T)$, and hence that actually each $t^\#\delta_\gamma$ is a unit, i.e. lies in $\ca O_T(T)^\times \sub \ca O_T(T)$. 

Since the product around each cycle in $\Gamma$ of the $t^\# \delta_e^{w_\gamma(e)}$ lies in $\ca O_T(T)^\times$, we can choose elements $r_v \in \on{Frac}\ca O_T(T)^\times$ for each vertex $v$ of $\Gamma$ such that for each directed edge $e\colon u \to v$ we have 
\begin{equation}\label{eq:r_label_relation}
\frac{r_u}{r_v} = t^\#\delta_e^{w_\gamma(e)}\cdot (\text{unit in }\ca O_T(T)). 
\end{equation}
For a vertex $v$, write $\eta_v$ for the generic point of the component of the special fibre of $C$ corresponding to $v$. Now we define a Weil divisor $Y$ on $C$ by specifying that $Y$ is trivial over $t^{-1}\ca M$, and that locally around $\eta_v$ it is cut out by $r_v$. Then \ref{eq:r_label_relation} and a small computation implies that $Y$ is actually a Cartier divisor. 

Now we claim that the line bundle $\omega^{\otimes k}_C(\Sigma + Y)$ defines an extension of $\sigma$ in $t^*J = \on{Pic}^0_{C/T}$. Clearly it coincides with $\sigma$ over $t^{-1} \ca M$, so all we need to check is that $\Sigma + Y$ has degree zero on every irreducible component of the special fibre of $C$. Fix a vertex $v$. We need to check that the degree of $\ca O_C(Y)$ on the component $C_v$ of the special fibre corresponding to $v$ is exactly the sum of the weights of the non-leg half-edges out of $v$. After adjusting $Y$ by the pullback of a divisor on $T$ we may assume that $r_v = 1$. Let $e\colon u \to v$ be an edge out of $v$. Then the completed \'etale local ring at the singular point corresponding to $e$ is isomorphic to
\begin{equation*}
\widehat{\ca O_T(T)^{et}}[[x,y]]/(xy- t^\#\delta_e), 
\end{equation*}
where we take $x$ to be the coordinate vanishing on $C_v$ and $y$ to be vanishing on $C_u$. We may assume that $r_u$ is given by $t^\#\delta_e^{w(e)}$ (since we can ignore $T$-units), so $Y$ is locally defined by $y^{w(e)}$, and the order of vanishing on $C_v$ is exactly $w(e)$ as required. 

$(2) \implies (1)$: Again, we may assume $T$ is local. We consider first the case where $T$ is regular. The argument above is almost reversible; we are assuming this extension of $\sigma$ exists, and it is necessarily given by a line bundle $\cl L$ of degree 0 on every irreducible component of every fibre. Then $\cl L (-\Sigma) = \ca O(Y)$ for some vertical Cartier divisor $Y$. For each vertex $v$ of $\Gamma$, let $r_v \in \on{Frac}\ca O_T(T)^\times$ be a local equation for $Y$ at $\eta_v$. 

By the regularity of $T$ we can apply \cite[theorem 4.1]{Holmes2014Neron-models-an} to see that for each directed edge $e\colon u \to v$, an equation of the form 
\begin{equation}
r_u/r_v = t^\#\delta_e^{a}\cdot (\text{unit in }\ca O_T(T))
\end{equation}
holds for some $a \in \bb Z$. Assigning to the directed edge $e$ the integer $a$ is easily verified to give a weighting $w$ on $\Gamma$, and for each cycle $\gamma$ we see that $\prod_{e \in \gamma} t^\# \delta_e^{w_\gamma(e)} \in \on{Frac}\ca O_T(T)^\times$ actually lies in the subgroup $\ca O_T(T)^\times \sub \on{Frac}\ca O_T(T)^\times$. Hence the map
\begin{equation*}
\begin{split}
\Lambda[c_w^\vee] & \to \on{Frac}\ca O_T(T)\\
\delta_e &  \mapsto  t^\# \delta_e
\end{split}
\end{equation*}
factors via $\ca O_T(T) \sub \on{Frac}\ca O_T(T)$, and we are done. 

It remains to reduce the general case to the case when $T$ is regular. So assume $T$ is local, normal and (locally) desingularisable, and let $T' \to T$ be a desingularisation, so $T'$ is regular and $T' \to T$ is proper, surjective, and birational. By Zariski's Main Theorem, the fibres of $T' \to T$ are connected. Write $\frak t$ for the closed point of $T$.  

Since we know the result in the regular case, we can apply this to $T'$ to find an open cover $\{V_i\}_{i \in I}$ of $T'$, and weightings $w_i$ on the $V_i$, such that each $V_i \to \Mbar_\Gamma$ factors via $\Md_{w_i} \to \Mbar_\Gamma$. Adjusting the cover, we may assume that each $V_i$ is connected and meets the fibre $T'_{\frak t}$ of $T'$ over the closed point of $T$. If $\Gamma'$ is the graph over $\frak t$, the weightings $w_i$ on $\Gamma$ need not be unique, but their restrictions to the contracted graph $\Gamma'$ are unique. To simplify the notation we will assume $\Gamma' = \Gamma$, since the value taken by the weightings on the contracted edges never plays any role. 

Now $T'_{\frak t}$ is connected and is covered by the $V_i \cap T'_{\frak t}$, and the weightings $w_i$ must agree on overlaps of the $V_i \cap T'_{\frak t}$, so we see that actually all the $w_i$ are equal. Write $w$ for this weighting; we will show that $T \to \Mbar_\Gamma$ factors via $\Md_w \to \Mbar_\Gamma$. 


Note that each $t^\#\delta_e$ is a regular element in $\ca O_T(T)$ since its restriction to $t^{-1}\ca M$ is invertible. We write $\on{div}(t^\#\delta_e)$ for the associated Cartier divisor on $T$. 

Fix a directed loop $\gamma$ in $\Gamma$. By a similar argument as in the regular case, to construct the map $T \to \Md_w$ it is enough to show that, for each cycle $\gamma$ we have 
\begin{equation}\label{eq:cartier_equality}
\sum_{e \in \gamma} w_\gamma(e)\on{div}(t^\#\delta_e) = 0. 
\end{equation}
But we know that \ref{eq:cartier_equality} holds on each $V_i$ after pulling back (since we have maps $V_i \to \Md_w$) so the result follows from \cite[lemma 2.23]{Holmes2014Neron-models-an}. 
\end{proof}

\begin{corollary}\label{lem:extending_implies_factors}
Let $t\colon T \to \Mbar$ be any $\sigma$-extending morphism. Then $t$ factors uniquely via $\Md \to \Mbar$. 
\end{corollary}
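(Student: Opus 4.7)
The plan is to reduce the statement, via étale descent on the target $\Mbar$, to a statement about combinatorial charts, where \ref{lem:w_aligned_implies_extending} applies directly. The whole argument will be driven by a uniqueness principle that allows us to glue local factorisations without having to match the weightings $w$ used in each patch.

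First I would dispose of uniqueness. The map $\pi^\loz\colon \Md \to \Mbar$ is separated by \ref{thm:properties_of_Md_MD} and restricts to an isomorphism over $\ca M$; since $t$ is non-degenerate the preimage $t^{-1}\ca M$ is dense in $T$, so any two lifts $T \to \Md$ of $t$ agree on a dense open and therefore, by separatedness, on all of $T$. This uniqueness is exactly what lets us reduce to a local problem and glue afterwards.

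Next, given a combinatorial chart $\Mbar \leftarrow U \to \Mbar_\Gamma$, I would form the base change $T_U \coloneqq T \times_\Mbar U$ and check that $T_U \to \Mbar$ is still $\sigma$-extending. Normality, density of the preimage of $\ca M$, and local desingularisability (\ref{def:desingularisable}) all pass under the strict log étale base change $U \to \Mbar$, and the pullback of the extension of $\sigma$ on $T$ gives the required extension on $T_U$. Then \ref{lem:w_aligned_implies_extending}, direction $(2) \Rightarrow (1)$, gives an étale cover $\{V_i \to T_U\}$ and weightings $w_i \in W(\Gamma)$ such that each composite $V_i \to \Mbar_\Gamma$ factors through $\Md_{w_i} \hookrightarrow \Md_\Gamma$. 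In particular each $V_i \to U$ factors through $\Md_U$.

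Finally I would glue: since $\Md_U \to U$ is separated and is an isomorphism over the preimage of $\ca M$, the same uniqueness principle as above shows that the two induced maps $V_i \times_{T_U} V_j \rightrightarrows \Md_U$ coincide, irrespective of whether $w_i = w_j$. Thus the $V_i \to \Md_U$ descend to a morphism $T_U \to \Md_U$ over $U$, and the analogous uniqueness across overlaps of combinatorial charts $U, U'$ lets these in turn descend along the étale cover of $\Mbar$ by combinatorial charts to a morphism $T \to \Md$ over $\Mbar$, as required.

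The main obstacle I expect is bookkeeping around the weightings: the local $w_i$'s associated to the patches $V_i$ need not agree (they can differ on edges of $\Gamma$ that get contracted over the relevant points of $T$), so the proof cannot proceed by constructing a single global $w$ and invoking direction $(1) \Rightarrow (2)$. This is why the separatedness of $\pi^\loz$ and the density of $t^{-1}\ca M$ are essential: they let us glue morphisms into $\Md$ without ever having to compare the intermediate cones $c_{w_i}$.
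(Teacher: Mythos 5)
Your proposal is correct and follows essentially the same route as the paper's (very terse) proof: uniqueness from separatedness of $\Md \to \Mbar$ plus density of $t^{-1}\ca M$, existence by applying \ref{lem:w_aligned_implies_extending} locally on combinatorial charts, and glueing the local factorisations through the various $\Md_{w_i}$ using that uniqueness. You have merely spelled out the base-change and descent bookkeeping that the paper leaves implicit, and your observation that one need never reconcile the different weightings $w_i$ is exactly the point of the paper's phrase ``the uniqueness ensures that the maps we obtain glue on overlaps.''
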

\begin{proof}
Uniqueness is clear since the map is determined on $\ca M$, whose pullback is dense in $T$. Existence then follows immediately from \ref{lem:w_aligned_implies_extending}, since $\Md_\Gamma$ is formed by glueing together the $\Md_w$, and the uniqueness ensures that the maps we obtain glue on overlaps. 
\end{proof}

\begin{corollary}\label{cor:universal_property_md}
$\Md \to \Mbar$ is the terminal object in the 2-category of $\sigma$-extending morphisms to $\Mbar$. 
\end{corollary}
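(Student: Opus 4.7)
The plan is to show two things: first, that $\pi^\loz\colon\Md\to\Mbar$ is itself $\sigma$-extending, so that it sits in the 2-category in question; and second, that every other $\sigma$-extending morphism factors uniquely through it. The second assertion is already \ref{lem:extending_implies_factors}, so the bulk of the work is in verifying the first.

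For the first claim I would unpack the three ingredients of being $\sigma$-extending. Normality of $\Md$ is part of \ref{thm:properties_of_Md_MD}. Density of $(\pi^\loz)^{-1}\ca M$ in $\Md$ follows because $\pi^\loz$ is birational and an isomorphism over $\ca M$ (again \ref{thm:properties_of_Md_MD}). Local desingularisability holds because the question is \'etale-local on $\Md$, and \'etale-locally $\Md$ is a toric scheme $\Md_\Gamma$ over $\Mbar_\Gamma$; toric varieties are log regular, and \ref{def:desingularisable} explicitly allows log regular schemes. (A more hands-on alternative would be to subdivide the fan $F_\Gamma$ into a smooth refinement and invoke \cite{Fulton1993Introduction-to}.)

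For the extension of $\sigma$ itself, I would apply direction $(1)\Rightarrow(2)$ of \ref{lem:w_aligned_implies_extending} to the identity morphism $T:=\Md$. Working \'etale-locally, we may assume we are looking at $\Md_\Gamma\to\Mbar_\Gamma$ inside a combinatorial chart. By construction $\Md_\Gamma$ is the union of the affine patches $\Md_w$ for $w\in W(\Gamma)$, and on each such patch the identity map $\Md_w\to\Mbar_\Gamma$ factors through $\Md_w\to\Mbar_\Gamma$ tautologically. This is precisely hypothesis (1) of \ref{lem:w_aligned_implies_extending}, so (2) gives an extension of $\sigma$ over each $\Md_w$. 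These local extensions are unique (because $(\pi^\loz)^{-1}\ca M$ is dense and $\ca J$ is separated over $\Mbar$), so they glue to a global extension over $\Md$. This shows $\pi^\loz$ is $\sigma$-extending.

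Finally, given any $\sigma$-extending $t\colon T\to\Mbar$, \ref{lem:extending_implies_factors} gives a unique factorisation $T\to\Md$ over $\Mbar$, and 2-uniqueness is automatic because $\Md\to\Mbar$ is representable (hence the 2-category is really a 1-category on morphisms into it, modulo the unique 2-isomorphism identifying composites on the dense open $t^{-1}\ca M$). The main potential obstacle is the local desingularisability of $\Md$: if one did not want to quote the log regular case of \ref{def:desingularisable}, one would have to produce an explicit toric resolution of $\Md_\Gamma$ refining $F_\Gamma$, but this is standard toric geometry and not a genuine difficulty.
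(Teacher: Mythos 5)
Your proposal is correct and follows essentially the same route as the paper: normality and local desingularisability of $\Md$ come from its local toric (log regular) structure, the $\sigma$-extending property is obtained by applying direction $(1)\Rightarrow(2)$ of \ref{lem:w_aligned_implies_extending} to the tautological factorisation of each patch $\Md_w$ through itself, and terminality is then exactly \ref{lem:extending_implies_factors}. The paper's proof is just a terser version of yours (citing Kato's toric singularities paper for desingularisability and leaving the glueing of the local extensions implicit).
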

\begin{proof}
We know by \ref{thm:properties_of_Md_MD} that $\Md$ is normal, and it is locally desingularisable by \cite{Kato1994Toric-singulari} since it is locally toric (log regular). Applying \ref{lem:w_aligned_implies_extending} we see that $\Md \to \Mbar$ is $\sigma$-extending, since $\Md_w \to \Mbar_\Gamma$ is clearly $w$-aligned. \Cref{lem:extending_implies_factors} then shows that it is terminal. 
\end{proof}

\section{Properness of $\on{DRL}_\loz$}\label{sec:properness_of_DRL}
Write $\sigma'_\loz\colon \Md \to J$ for the extension of $\sigma$ (we reserve the notation $\sigma_\loz$ for the induced map $\Md\to \Md \times_{\Mbar} J$). Write $\on{DRL}_\loz$ for the schematic pullback of the unit section of the universal jacobian along $\sigma'_\loz$, so $\on{DRL}_\loz$ is a closed substack of $\Md$. We will show that $\on{DRL}_\loz$ is proper over $\Mbar$ (recalling that the map $\Md \to \Mbar$ is in general far from proper, c.f. \ref{fig:fan_Md}). 

\subsection{In characteristic zero}\label{sec:char_0_properness}
Over a field of characteristic zero (or more generally, over a ring over which all schemes of finite type admit a resolution of singularities) this result can be proven more directly. Namely, we will show that $\Md$ coincides with the normalisation of the closure of the image of $\sigma$ in $J$, from which properness follows immediately. More precisely, it is the underlying scheme of $\Md$ which is given by this construction, since the normalisation of the closure does not come with a natural logarithmic structure. 

Write $S$ for the schematic image of the section $\sigma$ in $J$ (in other words, for the closure of the image, with suitable reduced structure), and $S'$ for its normalisation. We will show that $S'$ is a universal $\sigma$-extending morphism. Indeed, the pullback of $\ca M$ is evidently dense in $S'$, and $S'$ is automatically normal, and admits a resolution of singularities by \cite{Hironaka1964Resolution-of-s} (here we use characteristic zero). Moreover, $S'$ comes with a map to $J$ making it $\sigma$-extending. 

Suppose $T \to \ca M$ is also $\sigma$-extending, then the map $T \to J$ factors via the inclusion $S \tra J$ by definition of the schematic image, and the resulting map $T \to S$ lifts to $S'$ since $T$ is assumed normal. We thus see that $S'$ is the universal $\sigma$-extending morphism, and so is canonically isomorphic to $\Md$. 

To prove properness of $\on{DRL}_\loz$ over $\Mbar$ we simply observe that $S' \to S$ is proper, so $\on{DRL}_\loz$ is proper over the intersection of $S$ with the unit section in $J$, and that intersection is automatically proper over $\Mbar$ since it is closed in the image of the unit section. 

In positive characteristic, we do not know that $S'$ is $\sigma$-extending (\ref{def:sigma_extending}), since we do not know if it admits a resolution of singularities. In the remainder of this section we will give a more hands-on proof of properness in this case. Some of the details (in particular the description of the universal line bundle) may also be of independent interest.

\subsection{The universal line bundle}\label{sec:universal_bundle}
Before giving the proof in the general case, we describe a certain line bundle on the universal curve over $\Md$, which will play a crucial role in the proof --- this may be of some independent interest. The proof itself is then mainly a matter of keeping careful track of isomorphisms and valuations. To simplify the notation, we write $\omega$ for the relative dualising sheaf of the universal curve over $\Mbar$. 

Let $p \colon \on{Spec} k \to \Md$ be a geometric point, and write $C_p$ for the stable curve over $k$. The map $\sigma'_\loz\colon \Md \to J$ determines an isomorphism class of line bundles on $C_p$ (necessarily with degree zero on every component). In this section we will give representatives of this isomorphism class. 

\begin{remark}\label{rem:rationality}
In fact, we do not need to assume that the field $k$ is algebraically closed; it is enough to assume that all irreducible components of $C_p$ are geometrically irreducible, and that all preimages of all nodes in the normalisation of $C_p$ are $k$-rational points. 
\end{remark}

Choose a combinatorial chart $\Mbar_\Gamma \leftarrow U \to \Mbar$ containing $p$, and such that $\Gamma$ is the graph of $C_p$. Let $w$ be the weighting on $\Gamma$ such that $p$ lies in $\Md_w$. If $v$ is a vertex of $\Gamma$, write $C_v$ for the corresponding irreducible component of $C_p$, and define a line bundle $\cl F_v$ on $C_v$ by the formula
\begin{equation}\label{eq:simple_f_formula}
\cl F_v = \omega^{\otimes k}|_{C_v}\left(\sum_{e}w(e)\cdot [e_v]\right). 
\end{equation}
Here the sum runs over directed edges $e$ out of $v$, and $e_v$ is the point on $C_v$ corresponding to the node $e$ on $C_p$. If $e$ is a self-loop then the point $e_v$ is not a Cartier divisor on $C_v$, but (by the definition of a weighting) it appears with coefficient $0$ in the above formula, so we do not worry about this. Writing $\tilde C_v$ for the normalisation of $C_v$, we have $\omega|_{\tilde C_v} = \Omega_{\tilde C_v}(\sum_e [e_v])$ (where the sum runs over all directed edges with an end at $v$), and the restriction of the latter to any node $e_v$ is canonically trivialised by the residue map. 

Now we need to glue the $\cl F_v$ together along the non-self-loop edges $e$ to give a line bundle on $C_p$. If $e\colon u \to v$ is a non-self-loop edge then we see
\begin{equation}\label{eq:triv_hom}
\begin{split}
\on{Hom}_k(\cl F_v|_{e_v}, \cl F_u|_{e_u}) 
& = \Big(\omega^{\otimes k}|_{C_v}(w(e)[e_v])\Big)\Big|_{e_v}^{\otimes -1} \otimes  \Big(\omega^{\otimes k}|_{C_u}(-w(e)[e_u])\Big)\Big|_{e_u} \\
& = \ca O_{C_v}(w(e)[e_v])|_{e_v}^{\otimes -1} \otimes  \ca O_{C_u}(-w(e)[e_u])|_{e_u} \\
& = \Big( \ca O_{C_v}([e_v])|_{e_v}\otimes \ca O_{C_u}([e_u])|_{e_u}\Big)^{\otimes-w(e)}. 
\end{split}
\end{equation}
By the deformation theory of stable curves the vector space $\ca O_{C_v}([e_v])|_{e_v}\otimes \ca O_{C_u}([e_u])|_{e_u}$ is naturally a sub-space of the tangent space to $p$ in $\Mbar$. The choice of combinatorial chart then yields a canonical generator of this summand of the tangent space, giving us a canonical isomorphism 
\begin{equation}\label{eq:triv_ten}
\ca O_{C_v}([e_v])|_{e_v}\otimes \ca O_{C_u}([e_u])|_{e_u} \iso k. 
\end{equation}
We can use this to explicitly describe how to glue the $\cl F_v$ together to a line bundle on $C_p$. The map $p\colon \on{Spec} k \to \Md_w$ corresponds to a map $p^\#\colon \Lambda[c_w^\vee] \to k$, and for each cycle $\gamma$ we see $p^\#\delta_\gamma \in k^\times$. Choose a function 
\begin{equation}\label{eq:lambda}
\lambda\colon \stackrel{\to}{E} \to k^\times
\end{equation}
such that $\lambda(i(e)) = \lambda(e)^{-1}$ and such that for every cycle $\gamma$ we have 
\begin{equation}\label{eq:prod}
\prod_{e \in \gamma} \lambda(e)^{w(e)} = p^\#\delta_\gamma. 
\end{equation}
For an edge $e\colon u \to v$, the $-w(e)$th power of the element $\lambda(e) \in k^\times$ gives (via \ref{eq:triv_ten} and \ref{eq:triv_hom}) an isomorphism $\cl F_v|_{e_v} \iso \cl F_u|_{e_u}$. We use these isomorphisms to glue the $\cl F_v$ to a line bundle on the whole of $C_p$ (cf. \cite{Ferrand2003Conducteur-desc}), which we denote $\cl F_\lambda$. Clearly $\cl F_\lambda$ depends on the choice of $\lambda$, but a different choice of $\lambda$ will yield an isomorphic $\cl F_\lambda$. 

\begin{proposition}\label{prop:univ_correction_bundle}
The isomorphism class $[\cl F_\lambda(-\Sigma)]$ of line bundles on $C_p$ corresponds to the image of the section $\sigma'_\loz$ in $\on{Pic}^0_{C_p/k}$. 
\end{proposition}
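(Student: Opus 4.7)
The plan is to deduce \ref{prop:univ_correction_bundle} by comparing $\cl F_\lambda(-\Sigma)$ against the explicit representative of the extended section $\sigma$ produced in the proof of $(1) \Rightarrow (2)$ of \ref{lem:w_aligned_implies_extending}. First I would verify that $\cl F_\lambda(-\Sigma)$ lies in $\on{Pic}^0_{C_p/k}$: on each component $C_v$, the degree of $\cl F_v\otimes\ca O_{C_v}(-\Sigma|_{C_v})$ equals
\begin{equation*}
k\kappa(v) + \sum_{\on{end}(h)=v,\;h\text{ non-leg}} w(h) - \sum_{x_i \in C_v} a_i,
\end{equation*}
which vanishes by \ref{def:weighting} together with the convention that legs carry weights $-a_i$. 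Hence the isomorphism class in question really does give a point of $\on{Pic}^0_{C_p/k}$.

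Next I would localise at $p$. Choose a strictly henselian local neighbourhood $T \to \Md_w$ with closed point $p$ and apply the construction used to prove $(1) \Rightarrow (2)$ of \ref{lem:w_aligned_implies_extending}: this produces elements $r_v \in \on{Frac}\ca O_T(T)^\times$ together with units $u_e \in \ca O_T(T)^\times$ satisfying
\begin{equation*}
r_u/r_v \;=\; t^\#\delta_e^{w(e)} \cdot u_e
\end{equation*}
for every directed edge $e\colon u \to v$, together with a vertical Cartier divisor $Y$ on $C_T$ locally cut out by $r_v$ at $\eta_v$, such that $\omega^{\otimes k}(Y-\Sigma)$ represents the extension of $\sigma$ over $T$. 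The proposition will follow once this line bundle is identified, after restriction to $C_p$, with $\cl F_\lambda(-\Sigma)$ on each component and across each node.

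Component-wise: at a node $e\colon u \to v$ with local equation $xy = t^\#\delta_e$, the local form of $Y$ computed in the proof of \ref{lem:w_aligned_implies_extending} shows that $Y|_{C_v}$ contributes precisely $w(e)\cdot[e_v]$, which matches the divisor appearing in (\oref{eq:simple_f_formula}) and so identifies $(\omega^{\otimes k}(Y-\Sigma))|_{C_v}$ with $\cl F_v \otimes \ca O_{C_v}(-\Sigma|_{C_v})$. Gluing-wise: the transition data for the restricted bundle across each node, translated via the canonical trivialisation (\oref{eq:triv_ten}) coming from the combinatorial chart, produces scalars in $k^\times$ that, when raised to the $w(e)$-th power and multiplied around any cycle $\gamma$, yield $p^\#\delta_\gamma$ --- this cycle identity being forced by $\prod_{e\in\gamma}(r_u/r_v) = 1$. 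These scalars are therefore admissible values of a $\lambda$ as in (\oref{eq:lambda}), and for any such choice one obtains $(\omega^{\otimes k}(Y-\Sigma))|_{C_p} \cong \cl F_\lambda(-\Sigma)$; the fact that a different choice of $\lambda$ yields an isomorphic bundle was already recorded after (\oref{eq:lambda}).

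The main obstacle I anticipate is the bookkeeping at the nodes: one must match the sign and orientation conventions of the canonical trivialisation (\oref{eq:triv_ten}) (which uses the deformation parameters singled out by the combinatorial chart) against those used to read off $Y$ from the $r_v$, and thereby verify that the scalar transitions of the restricted bundle are genuinely admissible values of $\lambda$. Once those conventions are consistently tracked, the argument is formal; but it requires threading the combinatorial chart simultaneously through the definition of $\cl F_\lambda$ and through the explicit extension constructed in \ref{lem:w_aligned_implies_extending}.
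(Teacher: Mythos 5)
Your proposal is correct and takes exactly the route the paper intends: the paper's own proof of this proposition consists of the single sentence that it ``follows by a rather messy unravelling of the constructions,'' and your outline --- restricting the explicit extension $\omega^{\otimes k}(Y-\Sigma)$ built in the proof of $(1)\Rightarrow(2)$ of \ref{lem:w_aligned_implies_extending} to the fibre over $p$, matching divisors componentwise via (\oref{eq:simple_f_formula}) and transition scalars nodewise via (\oref{eq:triv_ten}), with the cycle identity $\prod_{e\in\gamma}\lambda(e)^{w(e)}=p^\#\delta_\gamma$ forced by the telescoping product of the $r_u/r_v$ --- is precisely that unravelling, carried out in more detail than the paper itself supplies. (In passing: the paper writes the extension as $\omega^{\otimes k}_C(\Sigma+Y)$, which is a sign slip; your $\omega^{\otimes k}(Y-\Sigma)$ is the normalisation consistent with $\sigma=\omega^{\otimes k}(-\Sigma)$.)
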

\begin{proof}
This follows by a rather messy unravelling of the constructions in \ref{sec:construction}. 
\end{proof}

\subsection{The proof of properness in general}\label{sec:proper_general_case}

\begin{proposition}\label{thm:properness_lozenge} \label{lem:vcp}
The map $\on{DRL}_\loz \to \Mbar$ is proper. 
\end{proposition}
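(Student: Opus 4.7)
My plan is to verify the valuative criterion of properness. The map factors as
$\on{DRL}_\loz \hookrightarrow \Md \to \Mbar$, where the first arrow is a closed immersion and the second is separated and of finite presentation by \ref{thm:properties_of_Md_MD}. Hence $\on{DRL}_\loz \to \Mbar$ is separated and of finite presentation, and it remains only to check universal closedness. So I take a DVR $R$ with fraction field $K$, a $K$-point $\on{Spec} K \to \on{DRL}_\loz$ and a compatible $\on{Spec} R \to \Mbar$, and I must produce a lift $\on{Spec} R \to \on{DRL}_\loz$ (after possibly a finite extension of $R$, as is allowed for DM stacks).

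After passing to a suitable finite extension of $R$ and a combinatorial chart, I may assume the pulled-back universal curve $C_R$ is regular, with dual graph $\Gamma$ on the special fibre and $\Gamma_K$ on the generic fibre ($\Gamma \twoheadrightarrow \Gamma_K$ by contracting the edges that smooth out). The hypothesis that the generic point lies in $\on{DRL}_\loz$ together with \ref{prop:univ_correction_bundle} supplies a weighting $w_K$ on $\Gamma_K$ and gluing data $\lambda_K$ as in \ref{eq:lambda} making the universal line bundle $\cl F_{\lambda_K}(-\Sigma_K)$ on $C_K$ trivial. Pick a global trivialising section $s_K$. I then extend $s_K$ to a meromorphic section of $\omega^{\otimes k}(-\Sigma)$ on the regular total space $C_R$; the vertical part of its divisor reads off, edge by edge, the values a weighting $w$ on $\Gamma$ must take in order that the line bundle built from $w$ by the recipe of \ref{sec:construction} be represented by the line bundle $\omega^{\otimes k}(-\Sigma + Y)$ on $C_R$ with $Y$ vertical and of degree $0$ on every component of the special fibre. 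The reverse direction of the proof of \ref{lem:w_aligned_implies_extending} (applied at the local ring of $R$, which is regular) then gives a factorisation $\on{Spec} R \to \Md_w \subseteq \Md$ extending the generic-fibre map, together with an extension of $\sigma$ to $\tilde\sigma\colon \on{Spec} R \to J$.

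Finally, since $J \to \on{Spec} R$ is separated (it is a semiabelian scheme) and the two sections $\tilde\sigma$ and the zero section agree on the dense open $\on{Spec} K \subset \on{Spec} R$, they must agree everywhere, so $\tilde\sigma = 0$. Thus our lift $\on{Spec} R \to \Md$ factors through $\on{DRL}_\loz$, as required. The main obstacle to executing this plan is the middle step: translating the generic-fibre triviality of $\cl F_{\lambda_K}(-\Sigma_K)$ into the existence of a weighting $w$ on the larger graph $\Gamma$ whose associated twist makes the line bundle trivial across the special fibre of $C_R$. This requires carefully matching the combinatorial (weighting) picture of \ref{sec:construction} with the divisorial picture on the regular model, much as in the regular case of \ref{lem:w_aligned_implies_extending}; I expect this to be the main technical content.
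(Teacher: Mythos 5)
Your overall strategy is the paper's: reduce to the valuative criterion, use the explicit line bundle $\cl F_\lambda$ of \ref{prop:univ_correction_bundle} over the generic point, extend a trivialising datum across the special fibre to read off a weighting $w$ on the bigger graph, lift the trait into $\Md_w$, and conclude by separatedness/closedness. However, two of your reductions do not work as stated. First, you cannot "assume the pulled-back universal curve $C_R$ is regular after a finite extension of $R$": at a node of thickness $t$ the total space has local equation $xy = \pi^t$, and a finite extension of $R$ only multiplies $t$ by the ramification index, making the singularity worse. To get a regular total space you would have to pass to a semistable resolution, which subdivides each edge of the dual graph into a chain, and you would then still owe an argument that the resulting weighting descends to $\Gamma_p$ (equivalently, that the jump in the vertical multiplicity across a node is divisible by its thickness). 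The paper avoids resolving altogether: it works on the singular $C_T$, extends the generic trivialisations to Cartier divisors $Y_v$ on the closures $\bar C_v$, and \emph{defines} $w(e)=(\ca Y(v)-\ca Y(u))/\on{thickness}(e)$ on the contracted edges.

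Second, your appeal to the direction $(2)\Rightarrow(1)$ of \ref{lem:w_aligned_implies_extending} is not available in general: that lemma applies only to non-degenerate $T\to\Mbar$, i.e.\ when $t^{-1}\ca M$ is dense in $T$, whereas in the valuative criterion the generic point $\eta$ may land in the boundary of $\Mbar$ (so $\Gamma_\eta$ is nontrivial), and this case cannot be discarded. In that situation the section over $\eta$ is not $\omega^{\otimes k}(-\Sigma_K)$ but the glued bundle $\cl F_{\lambda_K}(-\Sigma_K)$, and a single "meromorphic section of $\omega^{\otimes k}(-\Sigma)$ on $C_R$" does not record the gluing data $\lambda(e)$ at the persisting nodes $e\in E_\eta$. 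One must track $\on{ord}_T\lambda(e)$ explicitly and feed it into the map $\Lambda[c_w^\vee]\to\ca O_T(T)$ (the paper's Steps 3--4, using \ref{eq:lambda}); this is exactly the "middle step" you flag as the main obstacle, and it is genuinely where the content of the proof lies, so as written the argument has a gap there. Your final step (separatedness of $J$ forces the extended section to be the unit section) is fine and equivalent to the paper's use of the closedness of $\on{DRL}_\loz$ in $\Md$.
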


\begin{proof}
\leavevmode
\noindent \textbf {Step 1:} Setup. 

The map is clearly separated and of finite presentation since the same holds for $\Md \to \Mbar$. We need to show that the dashed arrow in the following diagram can be filled in: 
\begin{equation}\label{tikz:proper}
\begin{tikzcd}
\eta \arrow[r] \arrow[d] & \on{DRL}_\loz \arrow[d] \\
T \arrow[r, "t"]\arrow[ur, dashed]  & \Mbar \\
\end{tikzcd}
\end{equation}
where $T$ is a strictly hensellian trait with generic point $\eta$ and closed point $p$. Choose a combinatorial chart $\Mbar_\Gamma \leftarrow U \to \Mbar$ containing $p$, and such that $\Gamma = \Gamma_p$ is the graph of $C_p$. 

We write $\Gamma_\eta$ for the graph over $\eta$, with edge set $E_\eta$ and vertex set $V_\eta$, and similarly over $p$, so we have a contraction map $\Gamma_p \to \Gamma_\eta$, and $E_\eta \sub E_p$. Given $v \in V_\eta$ we define $C_v$ to be the corresponding irreducible component of $C_\eta$. 

Let $w_\eta$ be a weighting on $\Gamma_\eta$ such that $\eta$ lands in $\Md_{w_\eta}$. 

\noindent \textbf {Step 2:} Extending the weighting to $\Gamma_p$. 

Because the trait $T$ is strictly hensellian, we know that all irreducible components of $C_\eta$ are geometrically irreducible, and all nodes and their tangent directions are rational over $\eta$. Because of this, by \ref{rem:rationality}, we can apply \ref{prop:univ_correction_bundle} over $\eta$. Accordingly, we choose
\begin{itemize}
\item
a line bundle $\cl L$ on $C_\eta$; 
\item 
for each $v \in V_\eta$ an isomorphism 
\begin{equation}\label{iso_3}
\cl L|_{C_v} \iso \omega^{\otimes k}|_{C_v}(\sum_v w(e)[e_v]) = \cl F_v. 
\end{equation}
\item 
an isomorphism 
\begin{equation}\label{iso_2}
\cl L(-\Sigma) \iso \ca O_{C_\eta};
\end{equation}
\end{itemize}
(the last is possible exactly because $\eta$ lands in $\on{DRL}_\loz$). 
Putting together \ref{iso_2} and \ref{iso_3} we obtain for each $e\colon u \to v$ an isomorphism 
\begin{equation}\label{eq:iso}
\cl F_v|_{e_v} \iso \cl F_u|_{e_u}. 
\end{equation}
Perhaps after replacing $\eta$ by a finite extension, we can choose elements $\lambda(e) \in \ca O_T(\eta)$ as in \ref{eq:lambda}, satisfying \ref{eq:prod} and inducing via \ref{eq:triv_hom} and \ref{eq:triv_ten} the isomorphisms \ref{eq:iso}. 

We write $\bar C_v$ for the closure of $C_v$ in the curve $C = C_T$ (so the $\bar C_v$ are the irreducible components of $C$), and we write $\bar {\cl F}_v$ for the line bundle on $\bar C_v$ given by 
\begin{equation}\label{eq:formula_bar_f}
 \bar{\cl F}_v  = \omega^{\otimes k}|_{\bar C_v}\left(\sum_s w(e)[e_v]\right), 
\end{equation}
where now we view $e_v$ as an element of $\bar C_v(T)$ (cf. \ref{eq:simple_f_formula}). Putting together \ref{iso_2} and \ref{iso_3} again we obtain trivialisations $\cl F_v(-\Sigma)\otimes \omega^{\otimes -k} \iso \ca O_{C_v}$, which we can think of as being trivialisations of $\bar{\cl F}_v(-\Sigma)\otimes \omega^{\otimes -k}$ over $\eta$, which yield Cartier divisors $Y_v$ on $\bar C_v$ supported on the special fibre. 

If $v' \in V_p$ is a vertex mapping to $v \in V_\eta$, we define $\ca Y(v')$ to be the multiplicity along the generic point of $C_{v'}$ of the divisor $Y_v$; this gives a function $\ca Y\colon V_p \to \bb Z$. Given $e\colon u \to v \in E_p \setminus E_\eta$ we define 
\begin{equation}
w(e) = \frac{\ca Y(v) - \ca Y(u)}{\on{thickness}(e)} \in \bb Z. 
\end{equation}
One now checks easily that this $w$ extends the weighting $w_\eta$ to a weighting $w$ onto whole of $\Gamma_p$. 

\noindent \textbf {Step 3:} Computing $\on{ord}_T \lambda(e)$. 

Let $e\colon u \to v  \in E_\eta$, so we have $\lambda(e) \in \ca O_T(\eta)$ giving the glueing $\cl F_v|_{e_v} \iso \cl F_u|_{e_u}$. Over $T$ we have isomorphisms
\begin{equation*}
\bar{\cl F}_v(Y_v)|_{e_v} = \bar{\cl F}_v(-\Sigma + Y_v)|_{e_v} \stackrel{a}{\to} \omega^{\otimes k}|_{e_v} = \omega^{\otimes k}|_{e_u}\stackrel{b}{\to} \bar{\cl F}_u(-\Sigma + Y_u)|_{e_u}  = \bar{\cl F}_u(Y_u)|_{e_u},  
\end{equation*}
where the isomorphisms $a$ and $b$ come from the definition of $Y_v$. 
Now $e \in E_\eta$ lifts to a unique edge $e' \in E_p$, and we write $v' \in V_p$ for the endpoint of $e'$ which maps to $v$, and similarly define $u'$. A small computation then shows that 
\begin{equation}
\on{ord}_T\lambda(e) = \ca Y(v') - \ca Y(u'). 
\end{equation}

\noindent \textbf {Step 4:} Constructing a lift $T \to \Md_w$. 

We want to construct a map $\Lambda[c_w^\vee] \to \ca O_T(T)$ (recall the description of $c_w^\vee$ from \ref{sec:universal_property_md}). The map $\Lambda[\bb N^{E_p}] \to \ca O_T(T)$ vanishes exactly on $\delta_e$ for $e \in E_\eta \sub E_p$, so we have a map $\Lambda[\bb Z^{E_p \setminus E_\eta}] \to \ca O_T(\eta)$, which we extend to a map $\phi\colon \Lambda[\bb Z^{E_p}] \to \ca O_T(\eta)$ by sending $\delta_e$ to $\lambda(e)$ for each $e \in E_\eta$. 

Let $\gamma$ be a cycle in $\Gamma_p$. Then 
\begin{equation*}
\sum_{(e \colon u \to v) \in \gamma} \on{ord}_T \phi(\delta_e)^{w_\gamma(e)} = \sum_{(e\colon u \to v) \in \gamma} \ca Y(v) - \ca Y(u) = 0, 
\end{equation*}
so $\phi\colon \Lambda[\bb Z^{E_p}] \to \ca O_T(\eta)$ restricts to a map $\Lambda[c_w^\vee] \to \ca O_T(T)$ as required. 

\noindent \textbf {Step 5:} Verifying the valuative criterion. 

We have constructed a map $T \to \Md$ over $t$ whose restriction to $\eta$ is as in \ref{tikz:proper}. Since the inclusion $\on{DRL}_\loz \to \Md$ is proper it follows that $T \to \Md$ factors via $\on{DRL}_\loz$ as required. 
\end{proof}

\section{Proof of \ref{main_theorem:proper,main_theorem:limit} }

We now have all the tools to easily prove \ref{main_theorem:proper,main_theorem:limit}. We begin by giving slightly more precise statements. 

An \emph{admissible modification of $\Mbar$} is a morphism $x\colon X \to \Mbar$ satisfying: 
\begin{itemize}
\item $x$ is proper and surjective; 
\item $x$ is birational (i.e. there exists a dense open $U \sub \Mbar$ such that $x^{-1}U$ is dense in $X$ and $x^{-1}U \to U$ is an isomorphism); 
\item  $X$ is normal and locally desingularisable (\ref{def:desingularisable}). 
\end{itemize}
 Admissible modifications together with maps over $\Mbar$ form a directed system. We check that $\Md$ can be compactified to an admissible modification (perhaps after modification of $\Md$ itself):

\begin{lemma}\label{lem:existence_of_cpct}
There exist a proper birational map $\widetilde \Md \to \Md$, an admissible modification $\MD \to \Mbar$, and an open immersion $\widetilde \Md \to \MD$  over $\Mbar$. In characteristic zero we may take $\widetilde \Md = \Md$. 
\end{lemma}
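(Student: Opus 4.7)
The plan is to construct $\MD$ étale-locally as a toric completion of $\Md$, and then descend.

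Over a combinatorial chart $\Mbar \leftarrow U \to \Mbar_\Gamma$, the morphism $\Md_\Gamma \to \Mbar_\Gamma$ is toric, associated to the fan $F_\Gamma$ sitting inside the positive orthant $\bb Q_{\ge 0}^E$ of the fan that defines $\Mbar_\Gamma$. In general the support of $F_\Gamma$ is strictly smaller than this orthant, which is precisely the obstruction to properness of $\pi^\loz$. I would choose a fan $\bar F_\Gamma$ which refines the positive orthant, contains $F_\Gamma$ as a subfan, and whose support equals the whole orthant. By the standard properness criterion for toric morphisms (support of source fan equals support of target fan), the resulting toric map $\MD_\Gamma \to \Mbar_\Gamma$ is then proper, contains $\Md_\Gamma$ as an open subscheme, and is automatically normal and birational.

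The main obstacle is arranging these local completions compatibly across combinatorial charts. When $\Gamma' \to \Gamma$ is a contraction of edges, the inclusion $\Mbar_{\Gamma'} \hookrightarrow \Mbar_\Gamma$ realises $\Mbar_{\Gamma'}$ as a torus-invariant open subscheme, and we need the restriction of $\bar F_\Gamma$ to the corresponding face to coincide with $\bar F_{\Gamma'}$. Since on each connected component of $\Mbar$ only finitely many isomorphism classes of leg-weighted graphs intervene, we can enumerate the relevant $F_\Gamma$ and organise the choices; to force full compatibility we may first have to pass to a common toric refinement of the $F_\Gamma$ before completing, which corresponds to the proper birational modification $\tilde \Md \to \Md$. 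Once this is done, the completions glue and descend to a global $\MD$ with an open immersion $\tilde \Md \hookrightarrow \MD$. Over a base of characteristic zero, we can apply resolution of singularities (or use a canonical construction such as a barycentric subdivision) to bypass the preliminary refinement step, and take $\tilde \Md = \Md$.

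It remains to verify that $\MD \to \Mbar$ is admissible in the sense of the preceding discussion. Properness, surjectivity and birationality follow from the construction. Normality is automatic because $\MD$ is locally toric. Local desingularisability reduces, by Kato's local description of log regular schemes, to the existence of a regular subdivision of any rational polyhedral fan, which holds over $\bb Z$. The hard part of the proof is thus purely combinatorial: organising the fan completions so that they are compatible with all edge-contractions simultaneously, which is exactly what forces the passage from $\Md$ to $\tilde \Md$ outside of characteristic zero.
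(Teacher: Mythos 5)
Your main construction is essentially the one in the paper: complete each fan $F_\Gamma$ inside the positive orthant to obtain a proper toric morphism $\MD_\Gamma \to \Mbar_\Gamma$, accept that this may force a subdivision of the cones of $F_\Gamma$ (whence the blowup $\tilde\Md \to \Md$, realised as the toric variety of the restriction of the completed fan to the support of $F_\Gamma$), observe that the transition maps between charts are inclusions of coordinate subspaces so that the completions can be organised compatibly and glued, and deduce admissibility from normality of toric varieties and the desingularisability of toric (log regular) singularities. This matches the paper's argument in arbitrary characteristic point for point, including the identification of the cross-chart compatibility of the $\bar F_\Gamma$ as the only genuinely delicate step.

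The one place where your argument does not go through as written is the final claim that in characteristic zero one may take $\tilde\Md = \Md$. A barycentric (or any other) subdivision refines the cones of $F_\Gamma$, so it produces precisely the modification $\tilde\Md \to \Md$ you are trying to avoid; and resolution of singularities does not by itself produce a completion of a fan containing $F_\Gamma$ as a subfan compatibly across all edge-contractions. What you would need is a compatible system of completions that leaves the cones of $F_\Gamma$ untouched, and your sketch does not supply one. The paper sidesteps the toric picture entirely here: in characteristic zero it invokes Rydh's compactification theorem to embed $\Md$ as an open substack of some proper $\MD \to \Mbar$, and then uses Hironaka only to check that this (a priori non-toric) $\MD$ is locally desingularisable, hence admissible. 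If you want to keep a purely toric proof of the characteristic-zero refinement, you would instead need to appeal to a theorem on completions of fans without subdivision (e.g.\ Ewald or Rohrer) together with an argument that such completions can be chosen compatibly with restriction to coordinate faces.
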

Note that this $\widetilde \Md$ is then the largest open of $\MD$ which admits a map to $\Md$ over $\Mbar$. 
\begin{proof}
We begin by giving a simple argument in characteristic zero. First construct some compactification $\MD$ of $\Md \to \Mbar$ following \cite[\S 6]{Rydh2011Compactificatio}, then apply Hironaka \cite{Hironaka1964Resolution-of-s} to see that $\MD$ is desingularisable hence $\MD \to \Mbar$ is an admissible modification. 

In arbitrary characteristic the proof is slightly more involved. It is enough to treat the connected components of $\Mbar$ separately, so we fix one such component; abusing notation, we will still write it as $\Mbar$, and similarly write $\Md \to \Mbar$. 

Choose a finite cover $\frak U = \{U_i\to \Mbar\}$ by combinatorial charts (each $U_i$ having graph $\Gamma_i$), then choose a finite cover $\frak V = \{V_j \to \frak U \times_{\Mbar} \frak U\}$ of $\frak U \times_{\Mbar} \frak U$ by combinatorial charts, each $V_j$ having graph $\Gamma'_j$. For each $U_i$ we have a fan $F_i$ in $\bb Q^{E(\Gamma_i)}$ from \ref{def:fans}, and similarly for each $V_j$ a fan $F_j'$ in $\bb Q^{E(\Gamma'_j)}$. 

Fix a $V_j$. Recalling that combinatorial charts are by definition connected, composing the map $V_j \to \frak U \times_{\Mbar} \frak U$ with one of the projections yields a map $V_j \to U_i$ for some $i$. This yields maps $\Gamma_i \to \Gamma'_j$, $E(\Gamma_j') \to E(\Gamma_i)$, and $\bb Q^{E(\Gamma'_j)} \to \bb Q^{E(\Gamma_i)}$, and $F_i$ necessarily pulls back to $F'_j$ (we say the $F_i$ are `compatible on overlaps'). 

For each $i$ we can choose a finite refinement $\bar F_i$ of $F_i$ which is complete in the sense that it fills the positive orthant. Hence if $\MD_i/U_i$ is the toric variety associated to $\bar F_i$ then the map $\MD_i \to U_i$ is proper. If we write $\tilde F_i$ for the fan obtained by restricting $\bar F_i$ to the support of $F_i$ then the associated toric variety $\widetilde \Md_i$ over $U_i$ is a blowup of $\Md_{U_i}$ and has a natural open immersion to $\MD_i$. Since toric varieties are desingularisable we have proven the lemma `locally on $\Mbar$'. 

Since the transition maps $\bb Q^{E(\Gamma'_j)} \to \bb Q^{E(\Gamma_i)}$ are just inclusions of coordinate subspaces, it is not hard to check that the $\bar F_i$ can be chosen to be compatible on overlaps, whence they will glue to give a global construction. 
%
%
%
\end{proof}

\begin{definition}\label{def:DRL_DRC}
Given an admissible modification $x\colon X\to \Mbar$, the map $\sigma$ induces a map $\sigma'\colon X \times_{\Mbar} \ca M \to J$. We write $\mathring X$ for the largest open of $X$ over which the closure of the graph of $\sigma'$ inside $X \times_{\Mbar}J$ is flat. We write $\sigma_X'\colon \mathring X \to J$ for the induced map (informally, we call $\mathring X$ the `largest open of $X$ over which $\sigma$ extends'). 
 Clearly $\mathring X$ contains $x^{-1}\ca M$. We write $e$ for the unit section in the jacobian, and $\on{DRL}_X \coloneqq (\sigma_X')^*e$ for the pullback as a closed subscheme of $\mathring X$.

 The induced map $\sigma_X\colon \mathring X \to J \times_{\Mbar}\mathring X$ is then a regular closed immersion, since $J \times_{\Mbar}\mathring X$ is smooth over $\mathring X$. We denote by $e_X$ the unit section $\mathring X \to J \times_{\Mbar}\mathring X$. Then we can pull back the fundamental class of $e_X$ to a cycle class $\on{DRC}_X\coloneqq \sigma_X^![e_X]$ on 
 \begin{equation*}
 \mathring X \times_{\sigma_X, J \times_{\Mbar} \mathring X, e} \mathring X  = \on{DRL}_X. 
 \end{equation*}
\end{definition}

\begin{remark}\label{rem:commutativity}
In the above construction, we could alternatively have defined $\on{DRC}_X$ by pulling back the class of $\sigma_X$ along $e_X$. The resulting cycle would be the same. To see this, note first that, since $\sigma_X$ and $e_X$ are regular closed immersions, the fundamental class of $\mathring X$ is exactly the refined gysin pullback (along either $\sigma_X$ or $e_X$) of the fundamental class of $J \times_{\Mbar}\mathring X$. We then apply the commutativity of the intersection pairing, see \cite[theorem 3.13]{Vistoli1989Intersection-th}, c.f. \cite[theorem 6.4]{Fulton1984Intersection-th}. 
\end{remark}
 
\begin{remark}
In general $\mathring X$ is strictly smaller than $X$. For example, if $X = \bar {\ca M}_{g,n}$ itself then $\mathring {\bar {\ca M}}_{g,n}$ contains the compact type (even tree-like) locus, but need not be the whole of $\bar {\ca M}_{g,n}$. If $g=1$, $n=2$, and $a_1 = - a_2 = 2$ then $\mathring {\bar {\ca M}}_{g,n}$ is the complement of the boundary point corresponding to a pair of $\bb P^1$s glued at $0$ and $\infty$. If we take $X$ to be the blowup of $\Mbar_{g,n}$ at that point then $\mathring X$ will be the complement of the two singular points in the boundary that are contained within the exceptional curve of the blowup. 
\end{remark}

 Given any $\MD$ as in \ref{lem:existence_of_cpct}, note that $\mathring \MD = \widetilde \Md$. The inclusion $\mathring \MD \supset \widetilde \Md$ holds because $\Md$ and hence $\widetilde \Md$ are $\sigma$-extending. For the other implication, let $x \in \mathring \MD$, and let $T$ be a trait in $\mathring \MD$ through $x$ with generic point lying over $\ca M$. Since $T$ is $\sigma$-extending it lifts to $\Md$, and because $\widetilde \Md \to \Md$ is proper it lifts further to $\widetilde \Md$, so $x \in \widetilde \Md$. 

\begin{theorem}\label{main_theorem:proper_full}
Choose any $\MD$ as in \ref{lem:existence_of_cpct}. Let $X \to \Mbar$ be an admissible modification which factors via $\MD \to \Mbar$. Then $\on{DRL}_X \to \Mbar$ is proper. 
\end{theorem}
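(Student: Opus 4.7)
The plan is to reduce the statement to the already-established properness of $\on{DRL}_\loz \to \Mbar$ (\ref{thm:properness_lozenge}), by realising $\on{DRL}_X$ as an iterated base change of proper morphisms over $\on{DRL}_\loz$. Write $x\colon X \to \MD$ for the given factorisation, and $\pi\colon \tilde \Md \to \Md$ for the proper birational map from \ref{lem:existence_of_cpct}.

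The crucial geometric input is the identification $\mathring X = x^{-1}(\tilde \Md)$. The inclusion $x^{-1}(\tilde \Md) \subseteq \mathring X$ is automatic, since $\sigma$ extends over $\tilde \Md = \mathring \MD$ and hence pulls back to an extension over any preimage (and the graph of the extension is automatically flat). For the reverse inclusion, working étale-locally I would pick a geometric point $p \in \mathring X$ and choose a strictly henselian trait $T \to \mathring X$ (using normality and local desingularisability of $\mathring X$) whose closed point maps to $p$ and whose generic point lies in $x^{-1}(\ca M)$. The composite $T \to \mathring X \to \Mbar$ is then $\sigma$-extending, so by \ref{cor:universal_property_md} it factors uniquely via $\Md$, and then by properness of $\pi$ combined with the valuative criterion it factors further via $\tilde \Md$. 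On the other hand $T \to X \to \MD$ extends the same rational map from the generic point; since $\MD \to \Mbar$ is separated, the two extensions $T \to \MD$ agree, so $p \in x^{-1}(\tilde\Md)$.

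Given this identification, $\sigma_X$ is the composition $\mathring X = x^{-1}(\tilde\Md) \to \tilde\Md \to J$ (the second arrow being $\sigma_{\tilde\Md}$), so as schematic pullbacks we have
\begin{equation*}
\on{DRL}_X \;=\; x^{-1}\on{DRL}_{\tilde \Md}, \qquad \on{DRL}_{\tilde\Md} \;=\; \pi^{-1} \on{DRL}_\loz.
\end{equation*}
I would then decompose
\begin{equation*}
\on{DRL}_X \;\longrightarrow\; \on{DRL}_{\tilde \Md} \;\longrightarrow\; \on{DRL}_\loz \;\longrightarrow\; \Mbar,
\end{equation*}
and check properness of each factor: the rightmost is \ref{thm:properness_lozenge}; the middle is the base change of the proper $\pi$ along the closed immersion $\on{DRL}_\loz \hookrightarrow \Md$; and the first is the base change along $\on{DRL}_{\tilde\Md} \hookrightarrow \tilde\Md$ of the proper map $\mathring X \to \tilde\Md$, itself obtained as the base change of the proper $x\colon X \to \MD$ along $\tilde\Md \hookrightarrow \MD$.

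The main obstacle is the identification $\mathring X = x^{-1}(\tilde\Md)$: one must carefully combine the universal property of $\Md$ (to produce the lift of a trait to $\tilde\Md$) with the separatedness of $\MD \to \Mbar$ (to pin down that this lift is compatible with the given map to $\MD$). Once this is in hand, the rest is a formal chase of base changes of proper morphisms, and properness of $\on{DRL}_X \to \Mbar$ follows at once.
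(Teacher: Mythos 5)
Your proposal is correct and follows essentially the same route as the paper: the key step in both is the identification $\mathring X = x^{-1}(\tilde \Md)$ via a trait through a point of $\mathring X$, using the universal property of $\Md$ (\ref{cor:universal_property_md}) and properness of $\tilde\Md \to \Md$, after which everything reduces to \ref{thm:properness_lozenge} by base change of proper maps. The paper simply compresses your three-step factorisation into the single proper map $\mathring X \to \Md$ induced by $X \to \MD$; your extra remark that separatedness of $\MD \to \Mbar$ pins down the lift is a detail the paper leaves implicit.
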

\begin{proof}
Write $f\colon X \to \MD$ for the factorisation. Since $\Md$ is $\sigma$-extending we see that $f^{-1}\widetilde \Md \sub \mathring X$. In fact they are equal; let $x \in \mathring X$, and choose a trait $T$ in $\mathring X$ through $x$ with generic point lying over $\ca M$. Then $T$ is $\sigma$-extending hence lifts to $\Md$, and it lifts further to $\widetilde \Md$ since $\widetilde \Md \to \Md$ is proper. 

Write $\mathring f\colon \mathring X \to \Md$ for the restricted map (which is proper since $f$ is proper). Then $\on{DRL}_X = \mathring f^{-1}\on{DRL}_\loz$, so we have a proper map $\mathring f \colon \on{DRL}_X \to \on{DRL}_\loz$. By \ref{thm:properness_lozenge} we know $\on{DRL}_\loz \to \Mbar$ is proper, so we are done. 
\end{proof}
Observing that the admissible modifications which factor via $\MD$ form a cofinal system among all admissible modifications, we have established \ref{main_theorem:proper}. 

Whenever $\on{DRL}_X \to \Mbar$ is proper, we can form the pushforward of $\on{DRC}_X$ to $\Mbar$; write $\pi_*\on{DRC}_X$ for this cycle on $\Mbar$. We defined $\on{DRL}_\loz$ at the beginning of \ref{sec:properness_of_DRL} as the schematic pullback of the unit section $e$ of $J$ along the section $\sigma'_\loz\colon \Md \to J$, just as in \ref{def:DRL_DRC}. 

\begin{definition}
Write $\sigma_\loz\colon \Md \to \Md \times_{\Mbar} J$ for the section induced by $\sigma'_\loz$, a regular closed immersion, and write $[e_\loz]$ for the fundamental class of $\Md$, viewed as a cycle on $\Md \times_{\Mbar} J$ via the map $e_\loz$. Then define $\on{DRC}_\loz \coloneqq \sigma_\loz^![e_\loz]$, as a class on $\on{DRL}_\loz$, the refined gysin pullback in the pullback diagram 
\begin{center}
\begin{tikzcd}
\on{DRL}_\loz \arrow[r, "\sigma'"] \arrow[d, "e'"] & \Md \arrow[d, "e_\loz"]\\
\Md \arrow[r, "\sigma_\loz"] &\Md \times_{\Mbar} J,  \\
\end{tikzcd} 
\end{center}
\end{definition}
This is exactly the cycle as we would obtain applying \ref{def:DRL_DRC} with $X = \MD$. Since $\on{DRC}_\loz$ is a cycle class on $\on{DRL}_\loz$, we can define $\overline{\on{DRC}}$ as the pushforward of this cycle to $\Mbar$. 


\begin{theorem}\label{main_theorem:limit_full}
Choose any $\MD$ as in \ref{lem:existence_of_cpct}. If an admissible modification $x \colon X\to \Mbar$ factors via $\MD$, then $\pi_*\on{DRC}_X = \overline{\on{DRC}}$. 
\end{theorem}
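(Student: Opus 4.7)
The plan is to push the double ramification cycle down from $X$ to $\Mbar$ in two stages, first via the proper birational map $\mathring X \to \tilde \Md$ and then via $\tilde \Md \to \Md$, invoking the compatibility of the cycle-theoretic pullback of the unit section with proper pushforward at each stage.

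First I would set up the comparison maps. Let $f\colon X \to \MD$ be the given factorisation; as noted in the proof of \ref{main_theorem:proper_full}, one has $\mathring X = f^{-1}\tilde \Md$ and the restricted map $\mathring f\colon \mathring X \to \tilde \Md$ is proper. Because $X$ and $\MD$ are both admissible blowups of $\Mbar$, the map $f$, and hence $\mathring f$, is birational; likewise the map $\tilde f\colon \tilde \Md \to \Md$ is proper and birational. Uniqueness of $\sigma$-extensions on non-degenerate targets (\ref{cor:universal_property_md}, together with \ref{lem:extending_implies_factors}) forces the compatibilities $\sigma_X = \sigma_{\tilde \Md}\circ \mathring f$ and $\sigma_{\tilde \Md} = \sigma_\loz \circ \tilde f$, so that $\on{DRL}_X$, $\on{DRL}_{\tilde \Md}$, and $\on{DRL}_\loz$ fit into cartesian squares obtained by pulling back the unit section of $\ca J$.

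Next I would invoke the compatibility of the refined Gysin pullback of the unit section with proper pushforward. Since $\mathring f$ and $\tilde f$ are proper and birational, we have $\mathring f_*[\mathring X]=[\tilde \Md]$ and $\tilde f_*[\tilde \Md]=[\Md]$, whence the projection-formula style identities
\begin{equation*}
\mathring f_* \on{DRC}_X = \on{DRC}_{\tilde \Md}, \qquad \tilde f_* \on{DRC}_{\tilde \Md} = \on{DRC}_\loz
\end{equation*}
in the Chow groups of $\on{DRL}_{\tilde \Md}$ and $\on{DRL}_\loz$ respectively. Pushing forward along the proper map $\on{DRL}_\loz \to \Mbar$ of \ref{thm:properness_lozenge} then yields the claimed equality $\pi_{X*}\on{DRC}_X = \pi_*\on{DRC}_\loz$.

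The main subtlety I foresee is verifying cleanly the Fulton-style compatibility of refined Gysin pullback along the unit section with proper pushforward in the stacky setting: $\ca J$ is only semiabelian, but the unit section is a regular closed embedding of codimension $g$ (the identity lies in the smooth locus over $\Mbar$), so refined Gysin pullback is available and one appeals to an Artin-stack version of intersection theory (e.g.\ Kresch's) or works \'etale-locally and descends. A secondary technical point is to check that $\sigma_X = \sigma_{\tilde \Md}\circ \mathring f$ holds on all of $\mathring X$ and not merely over the dense $\ca M$-preimage; this follows from uniqueness of sections of a separated morphism that agree on a schematically dense open of a normal source.
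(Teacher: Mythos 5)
Your proof is correct and follows essentially the same route as the paper: both arguments rest on the birationality of the comparison map (so that fundamental classes push forward to fundamental classes) combined with the commutativity of proper pushforward with the refined Gysin homomorphism \cite[theorem 6.2(a)]{Fulton1984Intersection-th}, and then push down along the proper map $\on{DRL}_\loz \to \Mbar$. The only cosmetic differences are that you factor explicitly through $\tilde\Md \to \Md$ (the paper writes out only the case $\tilde\Md = \Md$ and remarks that the general case is similar) and that you apply the Gysin map of the unit section to $[\mathring X]$ rather than the Gysin map of $\sigma$ to the class of the unit section.
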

This would be a formality if we could pull back the cycle $\on{DRC}_\loz$ to $X$, but since $\Md$ and $X$ may be singular we must take some care with pulling back cycles. 
\begin{proof}
For simplicity we give the proof only in the case where $\widetilde \Md \to \Md$ is an isomorphism, i.e. we have an open immersion $\Md \to \MD$ (this can always be arranged in characteristic zero). To extend to the general case one uses a very similar argument to show that $\pi_*\on{DRC}_\loz = \pi_*\on{DRC}_{\MD}$. 

As in the proof of \ref{main_theorem:proper_full}, the factorisation $f\colon X \to \MD$ restricts to a proper map $\mathring f \colon \mathring X \to \Md$. Write $J_{\Md} \coloneqq J \times_{\ca M} \Md$, and $J_{\mathring X} \coloneqq J \times_{\ca M} \mathring X$. Then we can view $\sigma_{\Md}$ as a section of the projection $J_{\Md} \to \Md$, and similarly for $\mathring X$. And we write $e_{\Md}$ for the unit section of $J_{\Md}$, and similarly for $\mathring X$. Writing $f_J$ for the induced map $J_X \to J_{\Md}$, we obtain a commutative diagram
\begin{center}
\begin{tikzcd}[sep = huge]
J_{\mathring X} \arrow[r, "f_J"] \arrow[d] & J_{\Md} \arrow[d]\\
X \arrow[r, "f"] \arrow[u, bend left, "\sigma_X"] \arrow[u, bend right, swap, "e_X"] & \Md \arrow[u, bend left, "\sigma_{\Md}"] \arrow[u, bend right, swap, "e_{\Md}"]  \\
\end{tikzcd} 
\end{center}
Note that the upward-pointing arrows are closed immersions, so we can see them as algebraic cycles. Since $f$ is proper and birational, we see immediately that ${f_J}_*[e_X] = [e_{\Md}]$. By the commutativity of proper pushforward and the refined Gysin homomorphism (\cite[theorem 3.12]{Vistoli1989Intersection-th}, c.f. \cite[theorem 6.2(a)]{Fulton1984Intersection-th}), we see that 
\begin{equation*}
\sigma_{\Md}^!{f_J}_*[e_X] = f_* \sigma_X^![e_{X}]
\end{equation*}
hence 
\begin{equation*}
\on{DRC}_\loz = \sigma_{\Md}^![e_{\Md}] =\sigma_{\Md}^!{f_J}_*[e_X] = f_* \sigma_X^![e_{X}] = f_*\on{DRC}_X. \qedhere
\end{equation*}
\end{proof}
This immediately implies \ref{main_theorem:limit}, and shows moreover that the limit is given by the pushforward of $\on{DRL}_\loz$.

\section{Proof of \ref{main_theorem:comparison}}

In this section, we will use results of Cavalieri, Marcus, and Wise (\cite{Cavalieri2011Polynomial-fami}, \cite{Marcus2013Stable-maps-to-}) to check that our double ramification cycle $\overline{\on{DRC}}$ coincides with the cycle constructed in \cite{Graber2005Relative-virtua} and computed in \cite{Janda2016Double-ramifica}. We will temporarily denote the latter cycle by $\on{DRC}_{LGV}$. Their construction only applies in the `non-twisted' case $k=0$, so for the remainder of this section we restrict to that case without further comment.


We begin by briefly recalling the setup and notation from \cite{Marcus2013Stable-maps-to-}. We denote by $J$ the universal (semiabelian) jacobian over $\Mbar$. Over $\Mbar$ they construct a stack $\bar{M}(\cl P)$ of stable maps to (expansions of) $\cl P \coloneqq [\bb P^1 / \bb G_m]$. 
We will see in the next proposition that $\bar{M}(\cl P) \to \Mbar$ is birational, and their constructions yield a map $\bar{M}(\cl P) \to J$, extending the map $\sigma$ on $\ca M$ (if $\bar{M}(\cl P)$ were normal we would call it $\sigma$-extending). Writing $e \colon \Mbar \tra J$ for the closed immersion from the unit section (\cite{Marcus2013Stable-maps-to-} denote this by $Z$), we define $\bar{M}(\cl P / B\bb G_m)$ to be the fibre product of $e$ and $\bar{M}(\cl P)$ over $J$. 

Since $e \colon \Mbar \tra J$ is a regular closed immersion (say with ideal sheaf $\ca I$), it comes with a natural perfect relative obstruction theory, namely the cotangent complex $L_{e/J} = \iota^*\ca I /\ca I^2 [1]$ itself. The associated relative virtual class is just the fundamental class $[e]$ as a cycle on $e$ (c.f. \cite[example 7.6]{Behrend1997The-intrinsic-n}). 

In \cite{Cavalieri2011Polynomial-fami} they construct directly a perfect relative obstruction theory for $\bar{M}(\cl P/B\bb G_m) \to \bar{M}(\cl P)$, and show (see \cite[remark after prop 3.5]{Cavalieri2011Polynomial-fami}) that it coincides with the pullback of $L_{e/J}$ in the fibre product diagram 
\begin{center}
\begin{tikzcd}
\bar{M}(\cl P/B\bb G_m) \arrow[r] \arrow[d, tail] & \Mbar \arrow[d, tail, "e"]\\
\bar{M}(\cl P) \arrow[r] &J. \\
\end{tikzcd} 
\end{center}
The remaining key properties we need are summarised below. 
\begin{proposition}\leavevmode
\begin{enumerate}
\item $\bar{M}(\cl P) \to \Mbar$ is birational. 
\item $\bar{M}(\cl P/B\bb G_m)$ is proper over $\Mbar$; 
\item The pushforward along $\bar{M}(\cl P/B\bb G_m) \to \Mbar$ of the class associated to the pullback of $L_{e/J}$ to $\bar{M}(\cl P/B\bb G_m) \to \bar{M}(\cl P)$ is the double ramification cycle $\on{DRC}_{LGV}$ of \cite{Graber2005Relative-virtua}. 
\end{enumerate}
\end{proposition}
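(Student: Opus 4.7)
The plan is to deduce each of the three items from the analysis of stable maps to rubber in \cite{Marcus2013Stable-maps-to-} and \cite{Cavalieri2011Polynomial-fami}; only a little glue is needed on our side.

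First I would prove (1) by working on the open locus $\ca M \subset \Mbar$ of smooth curves. The data of a stable map from a smooth curve $C$ to $\cl P = [\bb P^1/\bb G_m]$ with the prescribed ramification profile is tautologically the data of a line bundle of the correct numerical type on $C$ together with a section having the prescribed zeros and poles, up to the $\bb G_m$-action. On smooth curves the line bundle is forced to be $\ca O(\sum a_i x_i)$ and the section is unique up to scaling, so $\bar{M}(\cl P) \to \Mbar$ restricts to an isomorphism over $\ca M$; since $\bar{M}(\cl P)$ is by construction a compactification of this locus, birationality follows.

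Next I would establish (2) via the valuative criterion. Using the Cartesian square defining $\bar{M}(\cl P/B\bb G_m)$ as the fibre product of $\bar{M}(\cl P) \to J$ with the closed immersion $e \tra J$, properness of $\bar{M}(\cl P/B\bb G_m) \to \bar{M}(\cl P)$ is automatic, and the question reduces to properness of $\bar{M}(\cl P) \to \Mbar$; the latter is the compactness theorem for stable maps to expansions of $\cl P$, proved by Li and reinterpreted logarithmically in \cite{Marcus2013Stable-maps-to-}.

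Finally, for (3), with (1) and (2) in hand, the identification of the virtual pushforward with $\on{DRC}_{LGV}$ is the substance of the comparison made in \cite{Cavalieri2011Polynomial-fami}: their perfect relative obstruction theory for $\bar{M}(\cl P/B\bb G_m) \to \bar{M}(\cl P)$ agrees, after pullback along the Cartesian square, with the pullback of (*), and the resulting virtual class pushes forward to the class defined by Graber, Vakil and Li on $\Mbar$. The main obstacle is (2), which ultimately rests on Li's degeneration machinery; items (1) and (3) amount to tracing definitions and quoting a compatibility of obstruction theories already verified in the literature.
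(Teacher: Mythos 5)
The paper's own proof of this proposition is pure citation: item (1) is \cite[proposition 3.4]{Cavalieri2011Polynomial-fami}, item (2) is the properness statement on p.~12 of \cite{Cavalieri2011Polynomial-fami}, and item (3) is the theorem at the top of p.~9 of \cite{Marcus2013Stable-maps-to-}. Your sketches for (1) and (3) are consistent with those references and add nothing problematic.

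Your argument for (2), however, has a genuine gap. You reduce properness of $\bar{M}(\cl P/B\bb G_m) \to \Mbar$ to properness of $\bar{M}(\cl P) \to \Mbar$ via base change along the closed immersion $e \tra J$. But $\bar{M}(\cl P) \to \Mbar$ is \emph{not} proper: it is only birational, and is in exactly the same situation as $\Md \to \Mbar$, which the paper emphasises is ``almost never proper.'' Indeed, since (the normalisation of) $\bar{M}(\cl P)$ is $\sigma$-extending, it factors through $\Md$ by the universal property; if it were also proper over $\Mbar$, then $\Md \to \Mbar$ would be universally closed, hence proper, contradicting the paper's Remark after \ref{thm:properties_of_Md_MD} (one can see the failure concretely already in the $g=1$, $a = (2,-2)$ example). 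Li's compactness theorem applies to the space of rubber maps with the contact/triviality condition imposed --- that is, to $\bar{M}(\cl P/B\bb G_m)$ itself --- not to the larger stack $\bar{M}(\cl P)$, which only records a line bundle of the correct multidegree and forgets whether it is trivial. So the properness of $\bar{M}(\cl P/B\bb G_m) \to \Mbar$ is precisely the nontrivial input here (it is the analogue of the paper's \ref{thm:properness_lozenge} for $\on{DRL}_\loz$), and it must be quoted directly from \cite{Cavalieri2011Polynomial-fami} rather than deduced from a base-change argument whose premise fails.
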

\begin{proof}\leavevmode
\begin{enumerate}
\item This is easy from the construction; it can be found in \cite[proposition 3.4]{Cavalieri2011Polynomial-fami}
\item This is stated in the third bullet point on page 958 of \cite{Cavalieri2011Polynomial-fami} (just above proposition 3.4). 
\item This is \cite[theorem, top of p9]{Marcus2013Stable-maps-to-}. 
\end{enumerate}
\end{proof}

\begin{remark}[On the characteristic of the base ring]
Write $\bar{M}(\cl P)'$ for the normalisation of $\bar{M}(\cl P)$. In this section we need that $\bar{M}(\cl P)'\to \bar{M}(\cl P)$ is proper, and that $\bar{M}(\cl P)'$ is locally desingularisable. If we work over a field of characteristic zero both conditions clearly hold. More generally, the finiteness of normalisation holds if we work over any universally Japanese base ring, for example $\bb Z$. In general resolution of singularities is more difficult outside characteristic zero. However, in \cite{Marcus2017Logarithmic-com} (which appeared after the first version of the present article) it is shown that $\bar{M}(\cl P)$ is log regular (at least under mild assumptions on the base scheme), hence it is locally desingularisable. With this new reference available, the comparison results in this section are valid over $\bb Z$. 
\end{remark}

Write $\bar{M}(\cl P/B\bb G_m)'$ for the fibre product of $\bar{M}(\cl P)'$ with $\bar{M}(\cl P/B\bb G_m)$ over $\bar{M}(\cl P)$. Because $\bar{M}(\cl P) \to \Mbar$ admits an extension of the section $\sigma$, the same holds for its normalisation $\bar{M}(\cl P)'$, so the latter is $\sigma$-extending. Hence can apply \ref{cor:universal_property_md} to obtain a canonical map $\bar{M}(\cl P)' \to \Md$ (we cannot work directly with $\bar{M}(\cl P)$ since it might not be normal). Recalling that $\on{DRL}_\lozenge$ is the pullback of the unit section $e$ from $J$ to $\Md$, we obtain a diagram 
\begin{equation}\label{dia:two_squares}
\begin{tikzcd}
\bar{M}(\cl P/B\bb G_m)' \arrow[r, "f_\loz"] \arrow[d, tail] & \on{DRL}_\loz \arrow[r] \arrow[d, tail]& \Mbar \arrow[d, tail, "e"]\\
\bar{M}(\cl P)' \arrow[r] &\Md \arrow[r] & J \\
\end{tikzcd} 
\end{equation}
where both squares are fibre products. 

The perfect relative obstruction theory $L_{e/ J}$ for $e \colon \Mbar\tra J$ pulls back to both the other vertical arrows, yielding a class $\kappa$ on $\on{DRL}_\loz$, and a class $D'$ on $\bar{M}(\cl P/B\bb G_m)'$. 

\begin{lemma}
We have an equality $\kappa = \on{DRC}_\loz$ of classes on $\on{DRL}_\loz$. 
\end{lemma}
\begin{proof}
We begin by recalling the construction of the class $\kappa$ from the relative perfect obstruction theory of $e/J$. The perfect obstruction theory of $e/J$ is the cotangent complex $L_{e/J} = \iota^*\ca I /\ca I^2 [1]$ (where $\ca I$ is the ideal sheaf of $e$ in $J$), which we pull back to $\on{DRL}_\loz$. We then embed in this the relative intrinsic normal cone of $\on{DRL}_\loz$ over $\Md$. This is just the classical cone (as used by Fulton for schemes, and Vistoli for stacks); following \cite{Behrend1997The-intrinsic-n} we are supposed to take the stack quotient  by the relative tangent bundle of $\Md$ over $\Md$, but this is of course trivial. The class $\kappa$ is then obtained by intersecting this cone with the zero section of the cotangent complex (a vector bundle in a single degree). 

Next we compare this to our class $\on{DRC}_\loz$. The key pullback square is
\begin{center}
\begin{tikzcd}
\on{DRL}_\loz \arrow[r, "\sigma'"] \arrow[d, "e'"] & \Md \arrow[d, "e_\loz"]\\
\Md \arrow[r, "\sigma_\loz"] &\Md \times_{\Mbar} J,  \\
\end{tikzcd} 
\end{center}
where both $\sigma_\loz$ and $e_\loz$ are regular closed immersions. To define $\on{DRC}_\loz$, we take $[\Md]$ as a class in the Chow ring of $\Md$, and then apply the refined gysin pullback $\sigma_\loz^!$ to get a class on $\on{DRL}_\loz$ (this is equivalent to taking $e_\loz^!$, see \ref{rem:commutativity}). How is this refined gysin pullback defined? Using that $e_\loz$ is a regular closed immersion, we take its normal bundle in $\Md \times_{\Mbar} J$, which is just the pullback of the normal bundle of $e\colon \Mbar \to J$. We then pull back further to $\on{DRL}_\loz$, and embed the normal cone of $e'\colon \on{DRL}_\loz \to \Md$ inside it, and intersect with the zero section. This is precisely the same as the definition of the class $\kappa$ above, and we are done. 
\end{proof}

The above proof was just a matter of checking that Behrend-Fantechi's relative virtual class construction degenerates in our setting to Fulton's intersection product. We could have saved ourselves the effort of writing this proof if we had worked throughout with the language of Behrend-Fantechi, but we wished to emphasise that in our situation this construction is essentially classical.

Now apply Costello's theorem \cite[theorem 5.0.1]{Costello2006Higher-genus-Gr} to the left hand square in the diagram (\oref{dia:two_squares}). The bottom horizontal arrow is birational, and the top arrow is proper since both the source and target are proper over $\Mbar$. Hence we find that ${f_\loz}_* D' = \on{DRC}_\loz$ as classes on $\on{DRL}_\loz$. 

We can make another commutative diagram 
\begin{center}
\begin{tikzcd}
\bar{M}(\cl P/B\bb G_m)' \arrow[r, "f"] \arrow[d, tail] & \bar{M}(\cl P/B\bb G_m) \arrow[r] \arrow[d, tail]& \Mbar \arrow[d, tail, "e"]\\
\bar{M}(\cl P)' \arrow[r] &\bar{M}(\cl P) \arrow[r] & J \\
\end{tikzcd} 
\end{center}
where again both squares are fibre products. Note that $\bar{M}(\cl P)' \to \bar{M}(\cl P)$ is birational, since it is an isomorphism over the locus of smooth curves. Write $D$ for the class on $\bar{M}(\cl P/B\bb G_m)$ arising by pulling back the perfect relative obstruction theory $L_{e/J}$ from $e\colon \Mbar \tra J$. Another application of Costello's theorem yields that $f_*D' = D$. 

\begin{theorem}\label{main_theorem:comparison_full}We have an equality of classes on $\Mbar$:
\begin{equation*}
\on{DRC}_{LGV} = \overline{\on{DRC}}. 
\end{equation*}
\end{theorem}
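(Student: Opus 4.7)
The plan is to observe that both $\bar{\on{DRC}}$ and $\on{DRC}_{LGV}$ can be realised as the proper pushforward to $\bar{\ca M}$ of the single class $D'$ on $\bar{M}(\cl P/B\bb G_m)'$, and then combine the two Costello-theorem computations already carried out in the excerpt. Concretely, the map $p\colon \bar{M}(\cl P/B\bb G_m)' \to \bar{\ca M}$ admits two factorisations:
\begin{equation*}
\bar{M}(\cl P/B\bb G_m)' \xrightarrow{f_\loz} \on{DRL}_\loz \xrightarrow{\pi_\loz} \bar{\ca M}, \qquad \bar{M}(\cl P/B\bb G_m)' \xrightarrow{f} \bar{M}(\cl P/B\bb G_m) \xrightarrow{\pi_M} \bar{\ca M}.
\end{equation*}
Both factorisations are by proper morphisms: $\pi_\loz$ is proper by \ref{thm:properness_lozenge}, $\pi_M$ is proper by the second bullet of the proposition, and $f$, $f_\loz$ are proper as maps between stacks that are proper over the common base $\bar{\ca M}$ (using that normalisation of $\bar{M}(\cl P)$ is finite in the universally Japanese setting).

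First I would assemble the argument. By \ref{main_theorem:limit_full} applied to any admissible blowup $\MD$ produced by \ref{lem:existence_of_cpct}, we have $\bar{\on{DRC}} = {\pi_\loz}_*\on{DRC}_\loz$. The excerpt has already established the two cycle-theoretic identities
\begin{equation*}
{f_\loz}_* D' = \on{DRC}_\loz \quad \text{on } \on{DRL}_\loz, \qquad f_* D' = D \quad \text{on } \bar{M}(\cl P/B\bb G_m),
\end{equation*}
by applying Costello's theorem to the two squares in the respective diagrams (using that $\bar{M}(\cl P)' \to \Md$ and $\bar{M}(\cl P)' \to \bar{M}(\cl P)$ are birational, together with properness of the horizontal arrows at the top). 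Combining these with functoriality of proper pushforward gives
\begin{equation*}
\bar{\on{DRC}} = {\pi_\loz}_* \on{DRC}_\loz = {\pi_\loz}_* {f_\loz}_* D' = p_* D' = {\pi_M}_* f_* D' = {\pi_M}_* D = \on{DRC}_{LGV},
\end{equation*}
which is the desired equality.

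I do not expect any serious obstacle beyond bookkeeping: the genuine geometric inputs (the universal property of $\Md$ giving the map $\bar{M}(\cl P)' \to \Md$, properness of $\on{DRL}_\loz \to \bar{\ca M}$, compatibility of the two perfect obstruction theories on $\bar{M}(\cl P/B\bb G_m)$ with the one pulled back from $e \hookrightarrow J$, and Costello's comparison theorem) have all been assembled in the preceding paragraphs. The only point that deserves a line of care is verifying that the relevant squares are genuinely cartesian (so that the refined Gysin pullback of $[e]$ from $J$ really does compute both $\on{DRC}_\loz$ and $D'$ after pullback), but this is immediate from the constructions of $\on{DRL}_\loz$ and $\bar{M}(\cl P/B\bb G_m)'$ as fibre products with $e \hookrightarrow J$.
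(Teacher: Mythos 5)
Your proposal is correct and is essentially the paper's own argument: the proof there also pushes the single class $D'$ forward both ways around the commutative square with vertices $\bar{M}(\cl P/B\bb G_m)'$, $\bar{M}(\cl P/B\bb G_m)$, $\on{DRL}_\loz$ and $\bar{\ca M}$, using exactly the identities ${f_\loz}_*D'=\on{DRC}_\loz$, $f_*D'=D$, $g_*D=\on{DRC}_{LGV}$ and ${\pi_\loz}_*\on{DRC}_\loz=\bar{\on{DRC}}$ that you cite. No discrepancies to report.
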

\begin{proof}
This follows immediately from the above discussion by pushing forward in the commutative diagram
\begin{center}
\begin{tikzcd}
 \bar{M}(\cl P/B\bb G_m)' \arrow[d,swap] \arrow[r, "f"] & \bar{M}(\cl P/B\bb G_m) \arrow[d]\\
\on{DRL}_\loz \arrow[r, swap] & \Mbar. \\ 
\end{tikzcd} 
\end{center}
\end{proof}

\expanded{with all morphisms proper. We have classes:
\begin{itemize}
\item
 $D'$ on $\bar{M}(\cl P/B\bb G_m)'$
 \item 
 $D$ on $\bar{M}(\cl P/B\bb G_m)$
 \item 
 $\on{DRC}_\loz$ on $\on{DRL}_\loz$
 \item $\on{DRC}_{LGV}$ and $\overline{\on{DRC}}$ on $\Mbar$, 
\end{itemize}
and equalities
\begin{itemize}
\item
 ${f_\bloz}_*D' = \on{DRC}_\loz$
 \item 
 $f_*D' = D$
 \item $g_*D = \on{DRC}_{LGV}$
 \item ${g_\bloz}_*\on{DRC}_\loz = \overline{\on{DRC}}$. 
\end{itemize}
The result is immediate from the commutativity of the diagram. }

\Cref{main_theorem:comparison} follows immediately.

\bibliographystyle{amsplain}
\bibliography{prebib.bib}

\end{document}